\documentclass[12pt,a4paper]{amsart}
\usepackage{amsfonts}
\usepackage{amsthm}
\usepackage{amsmath}
\usepackage{amscd}
\usepackage[latin2]{inputenc}
\usepackage{t1enc}
\usepackage[mathscr]{eucal}
\usepackage{indentfirst}
\usepackage{graphicx}
\usepackage{graphics}
\usepackage{pict2e}
\usepackage{epic}
\usepackage{enumitem}
\numberwithin{equation}{section}
\usepackage[margin=1.5in]{geometry}
\usepackage{epstopdf} 
\usepackage{float}
\usepackage{setspace}
\usepackage{bbm}
\onehalfspacing

\theoremstyle{plain}
\newtheorem{theorem}{Theorem}[section]
\newtheorem{lemma}[theorem]{Lemma}
\newtheorem{corollary}[theorem]{Corollary}
\newtheorem{proposition}[theorem]{Proposition}
\theoremstyle{definition}
\newtheorem{definition}{Definition}[section]

\newtheorem{example}{Example}[section]

\newenvironment{customthm}[1]
  {\innercustomthm}
  {\endinnercustomthm}

\newcommand{\Q}{\mathbb{Q}}
\newcommand{\Z}{\mathbb{Z}}
\newcommand{\R}{\mathbb{R}}
\newcommand{\1}{\mathbbm{1}}
\newcommand{\Hom}{\text{Hom}}
\newcommand{\Ext}{\text{Ext}}
\newcommand{\Tor}{\text{Tor}}

\DeclareMathOperator*{\Conf}{Conf}
\DeclareMathOperator*{\Ker}{Ker}

\newcommand{\og}{\mathscr{O}_G}
\newcommand{\cg}{\mathscr{C}_G}

\usepackage{pgfplots}

\begin{document}

\title{On the Rational Bredon Cohomology of Equivariant Configuration Spaces}
\author[Q.Zhu]{Qiaofeng Zhu}
\address{University of Massachusetts, Boston \\ Department of Mathematics \\
Roston, MA, 02125} 
\email{Qiaofeng.Zhu@umb.edu}


\begin{abstract} 
Bredon cohomology is a cohomology theory that applies to topological spaces
equipped with the group actions. For any group $G$, given a real linear representation $V$ , the configuration space of $V$ has a natural diagonal $G$-action. In the paper we study this group action on the configuration space and give a decomposition of the homology Bredon coefficient system of the configuration space and apply this to compute Bredon cohomology of the configuration space for small non-abelian group $G$.
 
\end{abstract}

\maketitle

\setcounter{tocdepth}{2}
\tableofcontents
\section{Introduction}
The objects of study in equivariant algebraic topology are spaces equipped with an action by a topological group $G$.
The category of Bredon coefficient systems over a commutative ring $R$, denote by $\cg^R$, is the category of contravariant functors from the canonical orbit category of $G$ to the category of $R$-modules. It is an Abelian category with enough injectives. Using this category $\cg^R$, Bredon defined a homology and a cohomology theory for $G$-spaces \cite{Bredon67}.  

Through this paper we will assume that $G$ is a finite group. Given a $G$-space $X$, in order to compute Bredon cohomology, we need to access the homology of the fixed points sets $X^H$ for all subgroups $H$ of $G$ as well as an injective resolution for an arbitrary Bredon coefficient system. 

For an arbitrary $G$-space $X$, it might be a too difficult task to give a closed formula for the fixed point sets of all subgroups $H<G$. However, for particular types of spaces, we can obtain such formulas, which makes computing Bredon cohomology possible. In \cite{Qiaofeng18}, the author computed the fixed point sets for polyhedral products. In this paper, the objects to study is equivariant configuration spaces.

\begin{definition}
Let $G$ be a finite group. For an $\R$-linear representation $V$, the equivariant configuration space of $V$ is the space $\Conf(V,q)$ with the diagonal $G$-action.
\end{definition}
We prove the following fixed point theorem in Section 7.
\begin{customthm}{\ref{fixed point set of ecs}}
For any subgroup $H<G$, the H-fixed point $\Conf(V,q)^H$ is $\Conf(V^H,q)$.
\end{customthm}

Define the Weyl group $WH$ for any subgroup $H$ of $G$ by $$WH=N_G(H)/H.$$

\begin{definition}
For any subgroup $H<G$, let $V_H$ be a left $\Q(WH)$-module. Define a Bredon coefficient system $I(V_H)$ by 
$$\underline{I(V_H)}(G/K)= \Hom_{\Q(WH)}(\Q((G/K)^H), V_H).$$ 
For a $G$-map $f:G/K\rightarrow G/K'$, the map $\overline{f}: \Q((G/K)^H)\rightarrow \Q((G/K')^H)$ is induced by $f$. Then 
$$\underline{I(V_H)}(f): \underline{I(V_H)}(G/K')\rightarrow \underline{I(V_H)}(G/K)$$ is defined by $\underline{I(V_H)}(f)(g)=g\circ \overline{f}$ where $g\in I(V_H)(G/K')$. 
\end{definition}

In \cite{Doman88}, Doman proved Bredon coefficient systems in the form $\underline{I(V_H)}$ are injective. In addition, he provide an injective envelope for any Bredon coefficent system over $\Q$ using those injective coefficient systems.

\begin{theorem}[\cite{Doman88}]
$f:\underline{M}\rightarrow\oplus \underline{I(V_H)}$ is a injective envelope of $\underline{M}$, where direct sum is over all conjugacy classes of $G$.
\end{theorem}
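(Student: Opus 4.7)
The plan is to verify the three properties defining an injective envelope: that $E := \bigoplus_{(H)} \underline{I(V_H)}$ is injective, that $f$ is a monomorphism, and that $f$ is essential. The first is immediate from Doman's injectivity result for each summand $\underline{I(V_H)}$ together with closure of injectives under finite direct sums in $\cg^{\Q}$.

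The key structural tool for the remaining two properties is the adjunction
\[
\Hom_{\cg^{\Q}}\bigl(\underline{N},\underline{I(V)}\bigr) \;\cong\; \Hom_{\Q(WH)}\bigl(\underline{N}(G/H), V\bigr),
\]
which follows from the definition of $\underline{I(V)}$ together with the identification $(G/H)^H = WH$: a natural transformation into $\underline{I(V)}$ is pinned down by its $G/H$-component, and compatibility with $\Aut_G(G/H) = WH$ is precisely $\Q(WH)$-equivariance. The appropriate $V_H$ is then the primitive part
\[
V_H \;=\; \bigcap_{K \subsetneq H} \Ker\bigl(\underline{M}(G/H) \to \underline{M}(G/K)\bigr),
\]
a $\Q(WH)$-submodule of $\underline{M}(G/H)$ (one checks $WH$-stability by showing that for $w \in WH$ and $K \subsetneq H$, $\sigma_w \circ \tau$ factors through a projection to $G/(w^{-1}Kw)$ with $w^{-1}Kw \subsetneq H$). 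Since $\Q(WH)$ is semisimple, $V_H$ is a direct summand with $\Q(WH)$-equivariant projection $\pi_H \colon \underline{M}(G/H) \twoheadrightarrow V_H$, and $f_H$ is taken to be the morphism corresponding to $\pi_H$ under the adjunction.

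For the monomorphism property, given nonzero $m \in \underline{M}(G/K)$, descend along the subgroup lattice: either $m \in V_K$ (so the $(K)$-component of $f$ at $G/K$ returns $\pi_K(m) = m \ne 0$), or some restriction to a proper $K' \subsetneq K$ is nonzero, in which case one replaces $m$ by that restriction. Finiteness of the subgroup lattice forces termination at some minimal $K_0$ with nonzero $m_0 \in V_{K_0}$, and naturality of $f$ then gives $f(m) \ne 0$.

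The main obstacle is essentiality. The plan is to show that $f$ sends $\operatorname{soc}(\underline{M})$ isomorphically onto $\operatorname{soc}(E)$. Since $\cg^{\Q}$ is equivalent to modules over a finite-dimensional (hence Artinian) $\Q$-algebra, namely the category algebra of $\og^{\mathrm{op}}$, every nonzero subsystem $\underline{S} \subset E$ contains a simple subsystem, hence meets $\operatorname{soc}(E)$ nontrivially. On the other hand, the socle of $\underline{I(V_H)}$ is the semisimple subsystem supported at $(H)$ with value $V_H$ at $G/H$, so $\operatorname{soc}(E)(G/K)$ equals the $(K)$-summand $V_K$ inside $E(G/K)$. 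Unpacking the adjunction, for $v_K \in V_K$ the $(H)$-component of $f(v_K)$ at $G/K$ vanishes whenever $(H) \ne (K)$: any $G$-map $\phi_g \colon G/H \to G/K$ factors through $G/(g^{-1}Hg)$ with $g^{-1}Hg \subsetneq K$, and the primitive-part condition forces $\underline{M}(\phi_g)(v_K) = 0$. Meanwhile the $(K)$-component recovers $v_K$ itself, yielding $f(\operatorname{soc}(\underline{M})) = \operatorname{soc}(E)$ and hence that every simple sub of $E$ lies in the image of $f$. The delicate point is this socle matching, which crucially uses the primitive-part definition of $V_H$.
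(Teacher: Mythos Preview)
The paper does not give its own proof of this statement; it is quoted from Doman \cite{Doman88} both in the introduction and as Theorem~\ref{injective envelope} in Section~5, so there is no in-paper argument to compare against.

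Your reconstruction is nonetheless correct and follows the standard route. The adjunction $\Hom_{\cg^{\Q}}(\underline{N},\underline{I(V_H)}) \cong \Hom_{\Q(WH)}(\underline{N}(G/H),V_H)$ is precisely the coinduction--evaluation adjunction, your descent argument for injectivity of $f$ is fine, and the socle identification for $\underline{I(V_H)}$ is the heart of essentiality and is correct: for $K$ properly containing a conjugate of $H$, every $gK \in (G/K)^H$ has $g^{-1}Hg \subsetneq K$, and restricting along the projection $G/(g^{-1}Hg)\to G/K$ forces any socle element to vanish on $gK$. One point worth making explicit is your appeal to every nonzero subsystem of $E$ containing a simple; this needs not merely that the category algebra of $\og$ is Artinian but that its Jacobson radical is nilpotent, so that \emph{every} nonzero module (not only the finitely generated ones) has nonzero socle. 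This holds since the algebra is finite-dimensional over $\Q$.
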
 

One of the tools for computing Bredon cohomology is the universal coefficient spectral sequences.

\begin{theorem}[\cite{Bredon67}, \cite{May96}]
There is a universal coefficient spectral sequence that converges to Bredon cohomology 
$$E_2^{p,q}=\Ext_{\cg^R}^{p,q}(\underline{H_*}(X), \underline{M})\Rightarrow H_G^{p+q}(X, \underline{M}),$$
and a universal coefficient spectral sequence that converges to Bredon homology 
$$E^2_{p,q}=\Tor^{\cg^R}_{p,q}(\underline{H_*}(X), \underline{N})\Rightarrow H^G_{p+q}(X, \underline{M}).$$
\end{theorem}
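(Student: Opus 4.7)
The plan is to deduce both spectral sequences by the standard hyper-derived-functor construction, using the fact that the cellular Bredon chain complex of a $G$-CW complex consists of projectives in $\cg^R$.

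First, fix a $G$-CW structure on $X$. This yields the cellular Bredon chain complex $\underline{C_*}(X)$, whose value on $G/H$ is the cellular chain complex of $X^H$, and each $\underline{C_n}(X)$ is a direct sum of representable functors indexed by $G$-orbits of $n$-cells, hence projective in $\cg^R$. By the very definition of Bredon (co)homology, $H^*_G(X,\underline{M})=H^*(\Hom_{\cg^R}(\underline{C_*}(X),\underline{M}))$, and $\underline{H_*}(X)$ is the homology of $\underline{C_*}(X)$ as a complex in $\cg^R$.

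Next, choose an injective resolution $\underline{M}\to \underline{I}^*$ in $\cg^R$ (possible since the category has enough injectives; explicit envelopes are given by Doman's theorem recalled above). Form the double complex $D^{p,q}=\Hom_{\cg^R}(\underline{C_p}(X),\underline{I}^q)$ with horizontal differential from the boundary on $\underline{C_*}(X)$ and vertical differential from $\underline{I}^*$. Consider its total complex and the two associated spectral sequences. Taking vertical cohomology first, projectivity of $\underline{C_p}(X)$ makes $\Hom_{\cg^R}(\underline{C_p}(X),-)$ exact, so each column collapses to $\Hom_{\cg^R}(\underline{C_p}(X),\underline{M})$ concentrated in degree zero; the resulting $E_2$-page lies on the row $q=0$ and equals $H^p_G(X,\underline{M})$, identifying the abutment with Bredon cohomology. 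Taking horizontal cohomology first, injectivity of $\underline{I}^q$ makes $\Hom_{\cg^R}(-,\underline{I}^q)$ exact, which therefore commutes with homology, giving $E_1^{p,q}=\Hom_{\cg^R}(\underline{H_p}(X),\underline{I}^q)$; a further vertical derivation yields $E_2^{p,q}=\Ext^q_{\cg^R}(\underline{H_p}(X),\underline{M})$. This is the cohomological universal coefficient spectral sequence, the bigrading matching the notation $\Ext^{p,q}$ of the statement.

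The homology statement is dual: take a projective resolution $\underline{P}_*\to \underline{N}$ in $\cg^R$ and form the double complex $\underline{C_p}(X)\otimes_{\cg^R}\underline{P}_q$, where $\otimes_{\cg^R}$ is the coend that computes Bredon homology. The two filtrations give, on one side, the total $\underline{C_*}(X)\otimes_{\cg^R}\underline{N}$ whose homology is $H^G_*(X,\underline{N})$, and on the other side $E^2_{p,q}=\Tor_q^{\cg^R}(\underline{H_p}(X),\underline{N})$. The main subtlety is structural rather than spectral: one must verify that $\underline{C_n}(X)$ genuinely decomposes as the expected sum of representables (hence is projective), that $\Hom_{\cg^R}(-,\underline{I}^q)$ really preserves the homology of $\underline{C_*}(X)$, and that convergence holds when $X$ is infinite-dimensional, where boundedness fails and one needs an inverse-limit or $\lim^1$ argument on the filtration of the total complex. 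Once these points are confirmed, the rest is routine homological algebra in the abelian category $\cg^R$.
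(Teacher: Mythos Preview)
Your argument is the standard and correct construction of the universal coefficient spectral sequences via the double complex $\Hom_{\cg^R}(\underline{C_*}(X),\underline{I}^*)$ (resp.\ $\underline{C_*}(X)\otimes_{\cg^R}\underline{P}_*$), using that each $\underline{C_n}(X)$ splits as a sum of representables and is therefore projective. There is nothing to compare: the paper does not give its own proof of this statement but simply quotes it from \cite{Bredon67} and \cite{May96}, and what you have written is essentially the argument one finds there.
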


For equivariant spaces, the homology Bredon coefficient system has a decomposition. 
\begin{definition}
The Bredon coefficient system $\underline{\1_H}$ over $\Q$ is given by 
\[
\underline{\1_H}(G/K)=\left\{
\begin{array}{ll}
\Q,& \text{if $K$ conjugates to $H$,}\\
0, & \text{otherwise. }
\end{array}\right.
\]
\end{definition}

In section 7, we have the following result for the Bredon cohomology of equivariant configuration spaces.
\begin{customthm}{\ref{decompostion of homology bcs}}
For a finite group $G$, $V=\R[G]$ is the regular $\R$-linear representation of $G$. The following is a decomposition of the rational homology Bredon coefficient system of $\Conf(V,q)$,
\begin{enumerate}[label=\roman*)]
\item
For $n >0$, $$\underline{H_n}(\Conf(V,q))=\bigoplus_{H<G}\beta_{H,n}\underline{\1_H}$$ where
$\beta_{H,n}$ is the $n$-th Betti number of $\Conf(V^H,q)$.
\item
For $n=0$, $$\underline{H_0}(\Conf(V,q))=\underline{\Q}\oplus(q!-1)\underline{\1_{\{e\}}}$$
where $\underline{\Q}$ is the constant $\Q$ coefficient system.
\end{enumerate}
\end{customthm}

This decompostion allows us to compute Bredon cohomology of equivariant configuration spaces. In chapter 5, we compute the universal coefficient spectral sequence for the Bredon cohomology of $\Conf(\R[G],3)$ and $\Conf(\R[G],4)$ with the coefficient system $\1_0$.

\section{Bredon coefficient systems}
\begin{definition}
The canonical orbit category of group $G$, denoted by $\og$, is the category whose objects are G-spaces $G/H$ and morphisms are $G$-maps. 
\end{definition}

There is a $G$-map $f:G/H \rightarrow G/K$ if and only if $gHg^{-1}<K$. Notice that if $f(eH)=gK$ for some $G\in G$, then $$eK=f(g^{-1}H)=f(g^{-1}hg\cdot g^{-1}H)= g^{-1}hg\cdot f(g^{-1}H)=g^{-1}hg \cdot eK$$ for any $h\in H$. Hence $g^{-1}Hg<K$.

\begin{definition}
Let $R$ be a commutative ring and $Mod_R$ be the category of $R$-modules. A Bredon coefficient system over $R$ is a contravariant functor $\og\rightarrow Mod_R$. The category of Bredon coefficient systems over $R$ is denoted by $\cg^R$. When $R=\mathbb{Z}$, we  use $\cg$ for simplicity.
\end{definition}

\begin{example}
Given a based $G$-CW-complex, the n-th equivariant homotopy group $\underline{\pi_n}(X)$ for $n\geq 2$, is a Bredon coefficient system given by $$\underline{\pi_n}(X)(G/H)=\pi_n(X^H)$$
\end{example}

\begin{example}
Let X be a $G$-CW-complex, we can define the cellular chain complex of coefficient systems $\underline{C_*}(X)$, where
$$\underline{C_n}(X)(G/H)=H_n((X^n)^H, (X^{n-1})^H, R).$$
Define $$\underline{H_n}(X)=H_n(\underline{C_*}(X)).$$
\end{example}

By a general categorical argument, the category $\cg^R$ is an Abelian category with enough injectives. So we could talk about homological algebra concept such as homology and cohomology of chains and cochains in this category.

\begin{definition}
\label{Bredon chomology and homology}
Let $\underline{M}$ be a Bredon coefficient system and $X$ be a $G$-CW-complex. Define a cochain complex
$$\underline{C^n}(X; \underline{M})=\Hom_{\cg^R}(\underline{C_n}(X),\underline{M}),$$
Its cohomology, $$H^*_G(X, \underline{M}):=H^*(\underline{C^*}(X;\underline{M}))$$ is called the Bredon cohomology of $X$ with coefficient $\underline{M}$. 
\end{definition}

To define Bredon homology, we need a covariant functor $\underline{N}:\og\rightarrow Mod_R$ as a coefficient system. Define the cellular chain by 
$$C_n(X; \underline{N})=\underline{C_n}(X)\otimes_{\cg^R} \underline{N}=\int^{G/H}\underline{C_n}(X)(G/H)\otimes_R \underline{N}(G/H)$$
In other word, the tensor product $\otimes_{\cg^R}$ is given by the coend of the two functors. 

\begin{definition}
Bredon homology of $X$ with coefficient $N$ is given by 
$$H^G_n(X,\underline{N})=H_n(C_*(X; \underline{N}))$$
\end{definition}

\begin{theorem}[\cite{Bredon67},\cite{May96}]
\label{universal coefficient spectral sequence}
There are universal coefficient spectral sequences
$$E_2^{p,q}=\Ext_{\cg^R}^{p,q}(\underline{H_*}(X), \underline{M})\Rightarrow H_G^n(X, \underline{M}),$$
and
$$E^2_{p,q}=\Tor^{\cg^R}_{p,q}(\underline{H_*}(X), \underline{N})\Rightarrow H^G_n(X, \underline{M}).$$
\end{theorem}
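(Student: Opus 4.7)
The plan is the classical Cartan--Eilenberg hyper-derived-functor argument: construct a first-quadrant double complex whose total cohomology is computed by two spectral sequences, one of which degenerates to $H_G^*(X,\underline{M})$ while the other has $\Ext_{\cg^R}$ on its $E_2$-page. The key initial observation is that the cellular chain complex $\underline{C_*}(X)$ is componentwise projective in $\cg^R$: using the $G$-CW structure on $X$, the $n$-cells form equivariant orbits of type $G/H_\alpha\times D^n$, giving
$$\underline{C_n}(X) \cong \bigoplus_\alpha R[\Hom_{\og}(-, G/H_\alpha)],$$
and each representable is projective by the Yoneda lemma, since $\Hom_{\cg^R}(R[\Hom_{\og}(-,G/H)], \underline{F})\cong \underline{F}(G/H)$ and evaluation at $G/H$ is exact.

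Next, pick an injective resolution $\underline{M}\hookrightarrow \underline{I^\bullet}$ in $\cg^R$, which exists because $\cg^R$ has enough injectives, and form the first-quadrant double complex
$$D^{p,q} = \Hom_{\cg^R}(\underline{C_p}(X), \underline{I^q}).$$
The two standard filtrations yield spectral sequences converging to the same total cohomology of $D^{\bullet,\bullet}$. Filtering by columns, projectivity of each $\underline{C_p}(X)$ makes $\Hom_{\cg^R}(\underline{C_p}(X),-)$ exact, so the vertical cohomology is concentrated in row $0$ with value $\Hom_{\cg^R}(\underline{C_p}(X),\underline{M})$; the subsequent horizontal cohomology is $H_G^p(X,\underline{M})$, so this spectral sequence degenerates and identifies the total cohomology with $H_G^*(X,\underline{M})$. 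Filtering by rows, injectivity of each $\underline{I^q}$ makes $\Hom_{\cg^R}(-,\underline{I^q})$ exact, so the horizontal cohomology is $\Hom_{\cg^R}(\underline{H_p}(X),\underline{I^q})$, and the subsequent vertical cohomology then yields the bigraded $\Ext_{\cg^R}^{*,*}(\underline{H_*}(X),\underline{M})$ of the statement.

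For the homology spectral sequence, the mirror argument applies with a projective resolution $\underline{P_\bullet}\to\underline{N}$ and the double complex $D_{p,q} = \underline{C_p}(X)\otimes_{\cg^R}\underline{P_q}$, defined via the coend. Projectivity of the $\underline{C_p}(X)$ implies flatness, so one filtration collapses onto $H^G_*(X,\underline{N})$ and the other produces $\Tor_{*,*}^{\cg^R}(\underline{H_*}(X),\underline{N})$ on its $E_2$-page. The main obstacle is really the first step, establishing componentwise projectivity of $\underline{C_*}(X)$; once that is in hand, convergence is automatic because the double complexes are first-quadrant, and the rest is a direct application of the two filtrations of a double complex.
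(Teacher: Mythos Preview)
The paper does not actually supply a proof of this theorem; it is stated with citations to \cite{Bredon67} and \cite{May96} and then used as a black box. Your proposal is the standard Cartan--Eilenberg hypercohomology argument and is correct: the projectivity of $\underline{C_n}(X)$ via the $G$-CW decomposition and Yoneda, together with an injective resolution of $\underline{M}$, gives a double complex whose two filtrations produce the desired spectral sequence. There is nothing to compare against in the paper itself, so your write-up would in fact supply a proof where the paper offers none.
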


\section{Bredon coefficient systems in the language of path algebras}
Using the language of path algebra, we could also give a more ``algebraic'' description of Bredon cohomology and homology. In this subsection, we will define path algebra and give an alternative definition of Bredon coefficient systems as well as Bredon homology and cohomology. The readers who are interested in this topic could check for more details of path algebras and the related representation theory in the summary book \cite{ASS06}.

\begin{definition}
A quiver $Q=(V, E, s, t)$ is a directed graph, i.e. a graph that associates each edge a direction, where $V$ is the set of vertices and $E$ is the set of edges along with two maps $s,t: E\rightarrow V$ that for each edge $\alpha\in E$, the images $s(\alpha)$ and $t(\alpha)$ are the source and target of the edge $\alpha$ respectively. 
\end{definition}

\begin{example}
A small category $\mathscr{C}$ is naturally a quiver whose objects and morphisms are vertices and edges of the quiver respectively. We denote the quiver associated  with $\mathscr{C}$ by $Q_\mathscr{C}$. 
\end{example}

\begin{definition}
For any quiver $Q=(V, E, s, t)$, the set of paths of $Q$, denoted by  $P_Q$, consists of the following elements:
\begin{enumerate}[label=\roman*)]
\item
For each vertex $v\in V$, there is a trivial constant path $e_v$, and set $s(e_v)=t(e_v)=v$;
\item
All the finite sequences $\alpha_n\alpha_{n-1}\cdots\alpha_1$ where $\alpha_i\in E$ for each $i$ and $t(\alpha_k)=s(\alpha_{k+1})$ for $k=1,2,\cdots, n-1$. In other word, an actual path on the quiver.
\end{enumerate}
\end{definition}

Moreover, we can define a multiplication $\circ$ on $P_Q$. For any two paths $\alpha_n\alpha_{n-1}\cdots\alpha_1$, $\beta_m\beta_{m-1}\cdots\beta_1 $ in $P_Q$,
 
\[
\alpha_n\cdots\alpha_1 \circ \beta_m\cdots\beta_1 =\left\{
\begin{array}{cl}
\alpha_n\cdots\alpha_1\beta_m\cdots\beta_1 & \text{if } t(\beta_m)=s(\alpha_1),\\
0 & \text{otherwise.}
\end{array}
\right.
\]

\begin{definition}
Let $R$ be a commutative ring, the path algebra $RQ$ is an associative algebra with basis $P_Q$ whose multiplication is linearly induced by the multiplication $\circ$ on $P_Q$ over $R$. In addition, if $V$ is an finite set, then the algebra $RQ$ has a multiplicative identity $$1_{RQ}=\sum_{v\in V}e_v,$$ and these $e_v$'s are nonzero idempotents of $RQ$.
\end{definition}
\begin{example}
Let $Q$ be the quiver given by Figure \ref{fig:An}. Then over a field $k$, the path algebra $kQ$ is a $k$-algebra of dimension $\frac{n^2+n}{2}$. And it is isomorphic to the algebra of $n\times n$ upper triangular matrices over $k$.

\begin{figure}[H]
\centering
\begin{tikzpicture}
\filldraw [black] (-5,0) circle (2pt) node[above] {1};
\filldraw [black] (-3,0) circle (2pt) node[above] {2};
\filldraw [black] (-1,0) circle (2pt) node[above] {3};
\filldraw [black] (4,0) circle (2pt) node[above] {n-1};
\filldraw [black] (6,0) circle (2pt) node[above] {n};
\filldraw [black] (1.4,0) circle (1pt);
\filldraw [black] (1.8,0) circle (1pt);
\filldraw [black] (2.2,0) circle (1pt);
\draw[ultra thick, ->](-4.8,0)--(-3.2, 0);
\draw[ultra thick, ->](-2.8,0)--(-1.2, 0);
\draw[ultra thick, ->](-0.8,0)--(0.8, 0);
\draw[ultra thick, ->](2.8,0)--(3.8, 0);
\draw[ultra thick, ->](4.2,0)--(5.8, 0);
\end{tikzpicture}
\caption{Dynkin diagram of type $A_n$} 
\label{fig:An}
\end{figure}
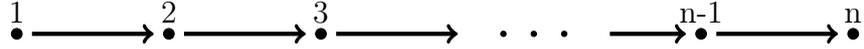
\end{example}

\begin{definition}
A relation of a quiver $Q$ is a subspace of RQ spanned by linear combinations of paths having a common source and a common target. Let $S$ be a set of relations  of $Q$. The path algebra with relation $S$, denoted by $RQ_S$ is $RQ/I_S$ where $I_S$ is the two-sided ideal generated by $S$. 
\end{definition}

For an algebra $A$ over $R$, Let $Mod_{A}$ be the category of right-$A$-modules and ${}_{A}Mod$ be the category of left-$A$-modules.

\begin{definition}
If $G$ is a finite group, the canonical orbit category $\og$ is a small category with finitely many objects. Let $Q= Q_{\og}$ be the quiver associated with $\og$. And $S$ is the set of relations given by the equivalences of morphisms in the category $\og$. $RQ_S$ is the corresponding path algebra.
\end{definition}

\begin{lemma}
\label{in the path algebra language lemma}
There is an equivalence of abelian categories $\cg^R\rightarrow Mod_{RQ_S}$. Then a Bredon coefficient system could be treated as a right-$RQ_S$-module and vice versa. 
\end{lemma}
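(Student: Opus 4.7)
The plan is to construct mutually inverse functors $F:\cg^R \to Mod_{RQ_S}$ and $G:Mod_{RQ_S} \to \cg^R$ and verify that both compositions are naturally isomorphic to the identity. The guiding idea is classical in the representation theory of small categories: a contravariant $R$-linear functor out of a small category with finitely many objects is the same datum as a right module over its category algebra, and $RQ_S$ is precisely that algebra for $\og$ (the vertices of $Q$ are the objects of $\og$, the edges are the morphisms, and the relations in $S$ encode composition and identities in $\og$).

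For the forward functor, given $\underline{M}\in\cg^R$, I set
\[
F(\underline{M}) = \bigoplus_{G/H} \underline{M}(G/H),
\]
viewed as a right $RQ_S$-module where each idempotent $e_{G/H}$ acts as the projection onto the corresponding summand and each generating edge $\alpha:G/H\to G/K$ of $Q$ acts on $\underline{M}(G/K)$ by $\underline{M}(\alpha)$ (and by zero on all other summands). Contravariant functoriality of $\underline{M}$ ensures that composition of paths in $RQ$ realises composition of morphisms in $\og$, so all relations in $S$ are annihilated and the action descends to $RQ_S$. A natural transformation $\underline{M}\to\underline{M}'$ is sent to the direct sum of its components, which commutes with the projections $e_{G/H}$ and with the edge-actions by naturality, hence is $RQ_S$-linear.

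For the inverse functor, given a right $RQ_S$-module $N$, I define $G(N)(G/H) = N\cdot e_{G/H}$; since $\{e_{G/H}\}$ is a complete orthogonal system of idempotents with $\sum_H e_{G/H} = 1_{RQ_S}$, we obtain the canonical $R$-module decomposition $N = \bigoplus_H N\cdot e_{G/H}$. A morphism $f:G/H\to G/K$ in $\og$ acts on $G(N)(G/K)$ by right multiplication by any path representative of $f$ in $RQ$; well-definedness modulo $S$ is exactly the statement that $S$ is generated by the relations identifying such representatives. The verifications $FG\simeq \mathrm{id}$ and $GF\simeq \mathrm{id}$ are then immediate from the idempotent decomposition, and functoriality of $G$ on morphisms is by restriction to summands. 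Preservation of the abelian structure follows because kernels, cokernels, and direct sums on both sides are computed on underlying $R$-modules, which $F$ and $G$ manifestly respect.

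The only genuine point of care — not an obstacle but a bookkeeping issue — is aligning the variance. Because coefficient systems are contravariant and the paper's convention writes paths right-to-left (so that $\alpha_n\cdots\alpha_1$ represents the composite $s(\alpha_1)\to\cdots\to t(\alpha_n)$), the module structure must be on the right, and for a composable pair $\alpha,\beta$ in $\og$ one should verify that the product $\beta\alpha\in RQ_S$ acts on an element of $\underline{M}(t(\beta))$ as the composite $\underline{M}(\alpha)\circ\underline{M}(\beta)$. Once this convention is fixed, the equivalence is entirely formal.
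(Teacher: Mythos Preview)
Your proposal is correct and follows essentially the same approach as the paper: define $F(\underline{M})=\bigoplus_{H<G}\underline{M}(G/H)$ with the right $RQ_S$-action induced by the structure maps, and $G(N)(G/H)=N\cdot e_{G/H}$ via the idempotents. Your write-up is in fact more careful than the paper's own proof, which merely records the two functors and asserts they give an equivalence; your added remarks on the idempotent decomposition, the annihilation of $S$, and the variance bookkeeping fill in exactly the details the paper leaves implicit.
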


\begin{proof}
We define two functors $F: \cg^R\rightarrow Mod_{RQ_S}$ and $G: Mod_{RQ_S}\rightarrow\cg^R$ as follows:
\begin{enumerate}[label=\roman*)]
\item
For any Bredon coefficient system $\underline{M}\in \cg^R$, let $$F(M):=\bigoplus_{H<G}\underline{M}(G/H).$$ The path algebra $RQ_S$ action on $F(M)$ is induced by structure maps of $\underline{M}$. Since the Bredon coefficient system is a contravariant funtor $\og\rightarrow Mod_R$, the R-linear space $F(M)$ admits a right $RQ_S$-module structure. 
\item
For any object $G/H$ in $\og$, we have the trivial path $e_{G/H}$. It is also an idempotent element of the path algebra $RQ_S$. For any right $RQ_S$-module $N$ in $Mod_{RQ_S}$, define a Bredon coefficient system $G(N)(G/H)=N.e_{G/H}$.
\end{enumerate}
The two functors $F$ and $G$ give the natural equivalence $$F: \cg^R\longleftrightarrow Mod_{RQ_S}:G.$$ 
\end{proof}

\begin{example}
Let $G=C_2$, the cyclic group of order 2.  the canonical orbit category is shown in Figure \ref{fig:C2}. The path algebra with relations corresponds to $\og$ has dimension 5, and a basis is given by $\{e_{C_2/C_2}, e_{C_2/0}, \alpha, \beta, \beta\alpha\}$. Notice that the loop $\beta$ is given by the Weyl group action. And since the Weyl group for trivial group is the whole group $C_2$, then set of relations in this case is $\{\beta^2\}$.

\begin{figure}[H]
\centering
\begin{tikzpicture}[scale=0.7]
\filldraw[black] (0,0) circle (2pt) node[above left] {$C_2/0$};
\filldraw[black] (0,3) circle (2pt) node[above] {$C_2/C_2$};
\node[left] at (0,1.5) {$\alpha$};
\node at (0,-1.5) {$\beta$};
\draw[ultra thick, ->](0,0.2)--(0,2.8);
 \draw [ultra thick, ->] (-0.3,0) arc(105:435:1);
\end{tikzpicture}
\caption{Canonical orbit category of $C_2$} 
\label{fig:C2}
\end{figure}
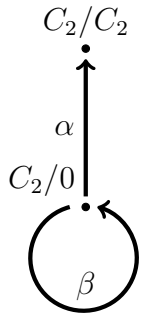
\end{example}

Similarly, a covariant functor $\underline{N}:\og\rightarrow Mod_R$ is equivalent to a left-$RQ_S$-module and the coend of a contravariant functor with a covariant functor is the tensor product of a right $RQ_S$-module with a left $RQ_S$-module over $RQ_S$. We could restate the definition of Bredon homology using the path algebra terminology.

\begin{definition}
Let $X$ be a $G$-CW-complex and $N$ be a left $RQ_S$-module. The cellular chain complex of coefficient systems $\underline{C_*}(X)$, where
$$\underline{C_n}(X)(G/H)=H_n((X^n)^H, (X^{n-1})^H, R).$$ Apply Lemma \ref{in the path algebra language lemma}, $F(\underline{C_n}(X))$ is a right $RQ_S$-module. Define the chain of $R$-module $C_*(X;N)$ by 
$$C_n(X; N)=F(\underline{C_n}(X))\bigotimes_{RQ_S} N$$

The Bredon homology of $X$ with coefficient system $N$ is given by 
$$H^G_n(X,N)=H_n(C_*(X; N)).$$
\end{definition}

\section{Reduced Bredon coefficient systems}
The computational complexity of Bredon cohomology is strongly related the complexity of the canonical orbit category $\og$. In this section, we reduce the Bredon coefficient system to a simpler form. This reduction is quite useful when we compute Bredon cohomology.

 Since the conjugation of subgroups in $G$ induces isomorphisms in the canonical orbit category $\og$, it is enough that we just consider one subgroup of $G$ for each conjugacy class to obtain a category which is equivalent to $\og$ but with fewer objects. Let $\widehat{\og}$ be the full subcategory of $\og$ whose object set consists of one and only one object for each isomorphism class in $\og$. This definition is not canonical, but $\widehat{\og}$ is unique up to category equivalence.
 
\begin{definition}
A reduced Bredon coefficient system over a commutative ring $R$ is a contravariant functor $$\underline{M}: \widehat{\og}\rightarrow Mod_R.$$ We denote the category of reduced Bredon coefficient systems by $\widehat{\cg^R}$.
\end{definition}
 
\begin{lemma}
The category $\widehat{\cg^R}$ is equivalent to $\cg^R$.
\end{lemma}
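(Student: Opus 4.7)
The plan is to deduce this equivalence from the standard 2-categorical fact that an equivalence of small categories induces an equivalence on any functor category by precomposition. First, I would observe that the inclusion $\iota: \widehat{\og} \hookrightarrow \og$ is fully faithful by construction (it is declared a full subcategory) and essentially surjective because $\widehat{\og}$ was chosen to contain exactly one representative of each isomorphism class in $\og$. Hence $\iota$ is itself an equivalence of categories.

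To promote this to the level of functor categories, I would define the restriction functor $\iota^{*}: \cg^R \to \widehat{\cg^R}$ by $\underline{M} \mapsto \underline{M} \circ \iota^{\mathrm{op}}$, and exhibit a quasi-inverse as follows. For each object $G/H$ of $\og$, choose an isomorphism $\phi_{G/H}: G/H \to G/\widehat{H}$ in $\og$ onto the unique object $G/\widehat{H} \in \widehat{\og}$ in its isomorphism class, with the convention that $\phi_{G/H} = \mathrm{id}$ whenever $G/H$ already lies in $\widehat{\og}$. Define $E: \widehat{\cg^R} \to \cg^R$ on objects by $E(\underline{N})(G/H) = \underline{N}(G/\widehat{H})$ and on a morphism $f: G/H \to G/K$ by $E(\underline{N})(f) = \underline{N}(\phi_{G/K} \circ f \circ \phi_{G/H}^{-1})$. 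The composition $\iota^{*} \circ E$ is strictly equal to $\mathrm{id}_{\widehat{\cg^R}}$ thanks to the identity convention, while the family $\{\underline{M}(\phi_{G/H})\}_{G/H}$ assembles into a natural isomorphism $E \circ \iota^{*} \cong \mathrm{id}_{\cg^R}$, with naturality following directly from the functoriality of $\underline{M}$.

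The main obstacle is really only bookkeeping: checking that $E$ respects composition of morphisms, which reduces to the cancellation $\phi \circ \phi^{-1} = \mathrm{id}$ on the intermediate object, and verifying that the isomorphism $E \circ \iota^{*} \cong \mathrm{id}$ is natural with respect to morphisms of Bredon coefficient systems. Both verifications are routine diagram chases that use only the functoriality of $\underline{M}$ together with the fact that $\iota$ is fully faithful, so no substantive difficulty arises beyond the initial choice of the isomorphisms $\phi_{G/H}$ (which is where the axiom of choice enters).
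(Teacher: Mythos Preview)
Your proposal is correct and follows essentially the same approach as the paper: both argue that since the inclusion $\widehat{\og}\hookrightarrow\og$ is an equivalence of source categories, the induced map on functor categories is an equivalence. The paper simply invokes this as a general categorical fact in two sentences, whereas you unpack the construction of the quasi-inverse $E$ and the natural isomorphism $E\circ\iota^{*}\cong\mathrm{id}$ explicitly; the underlying idea is identical.
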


\begin{proof}
The two functor categories $\widehat{\cg^R}$ and $\cg^R$ have the same target category $Mod_R$. Their source categories $\widehat{\og}$ and $\og$ are equivalent. Hence the functor categories are equivalent as well. The equivalence is induced by the equivalence between $\widehat{\og}$ and $\og$.
\end{proof}
Therefore in the future discussion, we will only talk about reduced Bredon coefficient systems and the reduced canonical orbit category. In order to simplify the notation, denote the reduced canonical orbit category again by $\og$.

\begin{example}
Let $G=\Sigma_3$, the (reduced) canonical orbit category is shown in Figure \ref{fig:S3}. Notice that $\sigma,\tau$ are the generators of the Weyl group $W\{e\}=\Sigma_3$ and $\iota$ is the generator of $W\langle(123)\rangle=\Z_2$. The set of relations is $\{\beta\alpha-\delta\gamma, \iota^2, \sigma^3, \tau^2, \tau\sigma\tau-\sigma^2,\sigma\tau\sigma-\tau\}$.

\begin{figure}[H]
\centering
\begin{tikzpicture}[scale=0.7]
\filldraw[black] (0,0) circle (2pt);
\node[above] at (0, 0.3){$\Sigma_3/\{e\}$};
\filldraw[black] (-3,3) circle (2pt) node[right] {$\Sigma_3/\langle (123)\rangle$};
\filldraw[black] (3,4) circle (2pt) node[right] {$\Sigma_3/\langle (12)\rangle$};
\filldraw[black] (0,6) circle (2pt) node[above] {$\Sigma_3/\Sigma_3$};
\node[below left] at (-1.5,1.5) {$\alpha$};
\node[above left] at (-1.5,4.5) {$\beta$};
\node[below right] at (1.5,2) {$\gamma$};
\node[above right] at (1.5,5) {$\delta$};
\node at (0,-1.5) {$\sigma$};
\node at (0,-2.5) {$\tau$};
\node at (-4.5,3) {$\iota$};
\draw[ultra thick, ->](-0.2,0.2)--(-2.8, 2.8);
\draw[ultra thick, ->](0.2,0.2)--(2.8, 3.8);
\draw[ultra thick, ->](-2.8,3.2)--(-0.2, 5.8);
\draw[ultra thick, ->](2.8,4.2)--(0.2, 5.8);
\draw [ultra thick, ->] (-0.3,0) arc(105:435:1);
\draw [ultra thick, ->] (-3,3.3) arc(15:345:1);
\draw [ultra thick, ->] (-0.45,0) arc(110:430:1.5);
\end{tikzpicture}
\caption{Reduced canonical category of $\Sigma_3$} 
\label{fig:S3}
\end{figure}
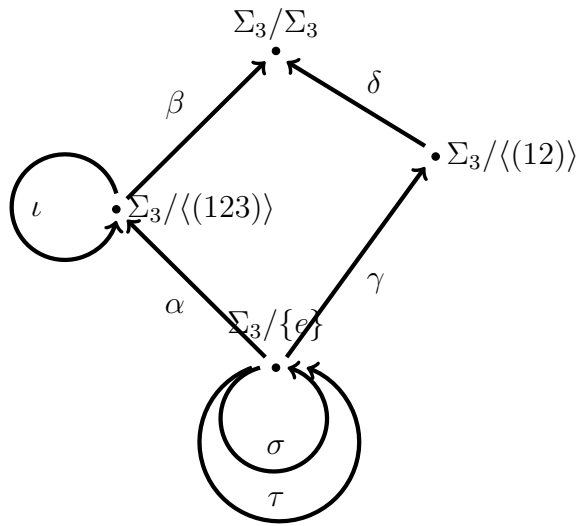
\end{example}

\section{Rational Bredon coefficient systems}
In order to apply the universal coefficient spectral sequence, we need to find an injective resolution for a given coefficient system. However, it could be a difficult task for a general underlying ring $R$. It is not known to the author whether there is a general method to construct an injective resolution for any coefficient system over the integers. However, over $\Q$, in [\cite{Doman88}], Doman constructed the injective envelope for any Bredon coefficient system over the rationals.

\begin{definition}
For any subgroup $H<G$, let $V_H$ be a left $\Q(WH)$-module. Define a Bredon coefficient system $I(V_H)$ by 
$$\underline{I(V_H)}(G/K)= \Hom_{\Q(WH)}(\Q((G/K)^H), V_H).$$ 
For a $G$-map $f:G/K\rightarrow G/K'$, the map $\overline{f}: \Q((G/K)^H)\rightarrow \Q((G/K')^H)$ is induced by $f$. Then 
$$\underline{I(V_H)}(f): \underline{I(V_H)}(G/K')\rightarrow \underline{I(V_H)}(G/K)$$ is defined by $\underline{I(V_H)}(f)(g)=g\circ \overline{f}$ where $g\in I(V_H)(G/K')$. 
\end{definition}

\begin{lemma}[\cite{Doman88}]
\label{injecitveness}
The coefficient system $I(V_H)$ is an injective object in the category $\cg^{\Q}$.
\end{lemma}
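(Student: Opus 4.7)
The plan is to prove injectivity by exhibiting $\Hom_{\cg^\Q}(-,\underline{I(V_H)})$ as the composition of two exact functors. The key is an adjunction of the form
\[
\Hom_{\cg^\Q}\bigl(\underline{M},\underline{I(V_H)}\bigr)\;\cong\;\Hom_{\Q(WH)}\bigl(\underline{M}(G/H),V_H\bigr),
\]
which identifies $\underline{I(V_H)}$ as a coinduced coefficient system associated to $V_H$ at the stage $G/H$.

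First I would set up the adjunction. The starting observation is that the set $(G/K)^H$ is naturally identified with $\Hom_{\og}(G/H,G/K)$ as a right $WH$-set, where $WH$ acts through its identification with $\Aut_{\og}(G/H)$. Under this identification, $\underline{I(V_H)}(G/K)=\Hom_{\Q(WH)}(\Q\Hom_{\og}(G/H,G/K),V_H)$. A morphism $\varphi\colon\underline{M}\to\underline{I(V_H)}$ in $\cg^\Q$ consists of compatible maps $\varphi_K\colon\underline{M}(G/K)\to\Hom_{\Q(WH)}(\Q\Hom_{\og}(G/H,G/K),V_H)$. Evaluating at the identity in $\Hom_{\og}(G/H,G/H)$ gives a $\Q(WH)$-linear map $\underline{M}(G/H)\to V_H$; naturality of $\varphi$ in $K$, applied to an arbitrary $G$-map $f\colon G/H\to G/K$, shows that $\varphi_K$ is reconstructed from this map by the formula $\varphi_K(m)(\bar f)=(\text{ev}_H\varphi)(\underline{M}(f)(m))$. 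This yields a natural bijection with $\Hom_{\Q(WH)}(\underline{M}(G/H),V_H)$; the argument is a direct Yoneda-style check, and the $WH$-equivariance falls out of the functoriality of $\underline{M}$ on the automorphisms of $G/H$.

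Next I would observe that the evaluation functor $\text{ev}_H\colon\cg^\Q\to{}_{\Q(WH)}\mathrm{Mod}$, $\underline{M}\mapsto\underline{M}(G/H)$, is exact, since (co)limits and hence exactness in functor categories are computed pointwise. Then, because $WH$ is a finite group and the base ring is $\Q$, Maschke's theorem makes $\Q(WH)$ semisimple, so every $\Q(WH)$-module is injective; in particular $V_H$ is injective, whence $\Hom_{\Q(WH)}(-,V_H)$ is an exact contravariant functor on $\Q(WH)$-modules. Combining the adjunction with these two facts expresses $\Hom_{\cg^\Q}(-,\underline{I(V_H)})$ as the composition $\Hom_{\Q(WH)}(-,V_H)\circ\text{ev}_H$, which is exact, so $\underline{I(V_H)}$ is injective in $\cg^\Q$.

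The main obstacle is the first step: verifying the adjunction carefully, in particular the naturality in $K$ and the correct identification of the $WH$-action on both sides. One must track that the automorphisms $\Aut_{\og}(G/H)\cong WH$ act on $\underline{M}(G/H)$ by contravariant functoriality, and that this matches the action on $\Q\Hom_{\og}(G/H,G/K)$ used to form the $\Q(WH)$-Hom; once this bookkeeping is done, everything else is formal from Maschke's theorem and the pointwise exactness of evaluation.
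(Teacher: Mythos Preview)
The paper does not give its own proof of this lemma; it simply attributes the result to \cite{Doman88}. Your argument is correct and is essentially the standard one: the identification $(G/K)^H\cong\Hom_{\og}(G/H,G/K)$ (which the paper itself notes in Section~2) lets you recognise $\underline{I(V_H)}$ as the right Kan extension/coinduction of $V_H$ along evaluation at $G/H$, and then injectivity follows because right adjoints of exact functors preserve injectives, together with Maschke's theorem over~$\Q$. This is in fact the line of argument in Doman's paper, so there is nothing materially different to compare.

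One small remark on your bookkeeping caveat: the $WH$-action on $\underline{M}(G/H)$ is via $\underline{M}$ applied to $\Aut_{\og}(G/H)$, which is a \emph{right} action since $\underline{M}$ is contravariant; correspondingly $\Q((G/K)^H)$ is a left $\Q(WH)$-module via precomposition, so the Hom you form is over matching actions. You flag this correctly as the only point requiring care, and once it is checked the rest is indeed formal.
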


Moreover, the injective coefficient system $\underline{I(V_H)}$ could be used to construct an injective envelope for any Bredon coefficient system $\underline{M}$ over $\Q$. Let $V_{\{e\}}=\underline{M}(G/\{e\})$. In general, let $$V_H=\bigcap_{K<H}\Ker\underline{M}(f_{K,H})$$ where $f_{K,H}:G/K\rightarrow G/H$ is the projection in the canonical orbit category.

\begin{theorem}[\cite{Doman88}]
\label{injective envelope}
For any coefficient system $\underline{M}$ over $\Q$, the map $f:\underline{M}\rightarrow\oplus \underline{I(V_H)}$ is an injective envelope of $\underline{M}$, where the direct sum is over all conjugacy classes of $G$.
\end{theorem}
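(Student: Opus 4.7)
The target $\bigoplus_H \underline{I(V_H)}$ is injective by Lemma \ref{injecitveness} and the fact that a finite direct sum of injectives is injective, so my plan reduces to exhibiting $f$ as a monomorphism and as an essential extension. The map itself arises from the adjunction
$$\Hom_{\cg^\Q}(\underline{M}, \underline{I(V)}) \cong \Hom_{\Q(WH)}(\underline{M}(G/H), V),$$
which exhibits $\underline{I(-)}$ as right adjoint to the evaluation functor at $G/H$ (verifiable on representables). Since $\Q(WH)$ is semisimple by Maschke, the inclusion $V_H \hookrightarrow \underline{M}(G/H)$ admits a $\Q(WH)$-equivariant retraction $\pi_H$, and I take $f_H$ to be the adjoint of $\pi_H$; explicitly, $f_H(m)(gK) = \pi_H(\underline{M}(\phi_g)(m))$ for $m \in \underline{M}(G/K)$ and $gK \in (G/K)^H$, where $\phi_g: G/H \to G/K$ sends $eH \mapsto gK$.

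To show $f = \bigoplus_H f_H$ is a monomorphism I would induct on $|K|$. The base case $K = \{e\}$ is immediate since $V_{\{e\}} = \underline{M}(G/\{e\})$ and $\pi_{\{e\}}$ is the identity. For the inductive step, if $f(m) = 0$ for $m \in \underline{M}(G/K)$, then by naturality each restriction $\underline{M}(f_{K',K})(m)$ for $K' \subsetneq K$ also lies in $\Ker f$, hence vanishes by induction; this forces $m \in V_K$. Evaluating the $H = K$ component of $f(m)$ at $eK \in (G/K)^K$ gives $\pi_K(m) = 0$, and since $\pi_K|_{V_K} = \mathrm{id}$, I conclude $m = 0$.

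The hard part is essentialness. Given a nonzero subobject $N \subset \bigoplus_H \underline{I(V_H)}$, I pick a conjugacy class $K$ minimal in the subconjugacy order with $N(G/K) \neq 0$, together with a nonzero $x = (x_L)_L \in N(G/K)$. For each $L$ with $x_L \neq 0$, the set $(G/K)^L$ is nonempty, so $L$ is subconjugate to $K$; if moreover $L$ were strictly so, then picking $gK \in (G/K)^L$ with $x_L(gK) \neq 0$ and transporting $x$ along $\phi_g$ would produce a nonzero element of $N(G/L)$, contradicting minimality. Hence $x$ is concentrated in the $L = K$ component, which is canonically identified with $V_K$ via evaluation at $eK$; let $v \in V_K \subset \underline{M}(G/K)$ be the corresponding element. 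I then compute $f(v)$: components $f_L(v)$ for $L$ not subconjugate to $K$ vanish trivially; for $L$ strictly subconjugate to $K$, $\phi_g$ factors as $G/L \cong G/(g^{-1}Lg) \hookrightarrow G/K$ with $g^{-1}Lg \subsetneq K$, so $v \in V_K$ forces $\underline{M}(\phi_g)(v) = 0$ and hence $f_L(v) = 0$; and $f_K(v)(gK) = \pi_K(g \cdot v) = g \cdot v$ identifies $f_K(v)$ with $v$ under $\underline{I(V_K)}(G/K) \cong V_K$. Therefore $f(v) = x$ is a nonzero element of $N \cap f(\underline{M})$. The subtle interplay is that minimality of $K$ concentrates $x$ in a single component while the defining kernel property of $V_K$ simultaneously prevents $f(v)$ from spilling into any other component.
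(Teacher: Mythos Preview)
The paper does not supply its own proof of this theorem: it is quoted from \cite{Doman88} and stated without argument. So there is no ``paper's proof'' to compare against beyond the attribution.

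That said, your argument is essentially the standard one and is correct in outline. The adjunction you invoke is exactly the mechanism that makes $\underline{I(V_H)}$ injective in the first place, and using semisimplicity of $\Q(WH)$ to split off $V_H$ as a retract is the right way to build $f_H$. Your induction for injectivity is clean. The essentialness argument is also sound: the key observation that minimality of $K$ forces $x$ to be concentrated in the single summand $\underline{I(V_K)}$, together with the kernel condition defining $V_K$ which kills all lower components of $f(v)$, is precisely the interplay that makes the construction work. One small point worth making explicit: when you say $\phi_g$ ``factors as $G/L \cong G/(g^{-1}Lg) \hookrightarrow G/K$'', the second arrow is the canonical projection (not an inclusion of sets), and the vanishing of $\underline{M}(\phi_g)(v)$ uses that $g^{-1}Lg$ is a \emph{proper} subgroup of $K$ so that $v \in V_K$ lies in the kernel of the corresponding structure map. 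You have this, but the notation $\hookrightarrow$ slightly obscures it.
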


\begin{corollary}
The global (injective) dimension of $\cg^\Q$ is less than $L-1$ where $L$ is the largest length of subgroup chains in $G$.
\end{corollary}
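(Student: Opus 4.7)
The approach is to construct an explicit injective resolution of any coefficient system $\underline{M}$ by iterating Theorem \ref{injective envelope}, and to bound its length in terms of the subgroup lattice of $G$. Set $\underline{M}^{(0)} = \underline{M}$, form the injective envelope $\underline{M}^{(k)} \hookrightarrow I^k := \bigoplus_H \underline{I(V_H^{(k)})}$ via Theorem \ref{injective envelope}, and let $\underline{M}^{(k+1)}$ be the cokernel. This yields an injective resolution $0 \to \underline{M} \to I^0 \to I^1 \to \cdots$, and the goal is to show $\underline{M}^{(k)} = 0$ once $k$ is sufficiently large in terms of $L$.

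The key computation is that $\underline{I(V_H)}(G/K) = \Hom_{\Q(WH)}(\Q((G/K)^H), V_H)$ vanishes whenever no conjugate of $H$ is contained in $K$, since $(G/K)^H$ is empty in that case. In particular $\underline{I(V_H)}(G/\{e\}) = 0$ unless $H = \{e\}$, and $\underline{I(V_{\{e\}})}(G/\{e\}) = V_{\{e\}}$. Since Doman's recipe sets $V_{\{e\}} = \underline{M}(G/\{e\})$, the envelope map is an isomorphism at the trivial orbit, so $\underline{M}^{(1)}(G/\{e\}) = 0$.

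I would then introduce a height function $\ell(H)$ equal to the length of the longest strictly ascending chain $\{e\} = H_0 < H_1 < \cdots < H_m = H$, and prove by induction on $k$ that $\underline{M}^{(k)}(G/K) = 0$ whenever $\ell(K) < k$. For the inductive step, assume the claim at level $k$ and take $K$ with $\ell(K) = k$. Every summand $\underline{I(V_H^{(k)})}(G/K)$ with $\ell(H) < k$ vanishes: by the inductive hypothesis $\underline{M}^{(k)}$ is already zero on orbits of smaller height, forcing $V_H^{(k)} = 0$ through Doman's recursive definition $V_H = \bigcap_{K' < H} \Ker \underline{M}(f_{K',H})$. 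On the remaining summand indexed by $H$ conjugate to $K$, the defect subspace $V_K^{(k)}$ equals all of $\underline{M}^{(k)}(G/K)$, because every $K' < K$ has $\ell(K') < k$ and hence contributes a trivial kernel condition. The Weyl-group module $\underline{M}^{(k)}(G/K)$ maps isomorphically into $\underline{I(V_K^{(k)})}(G/K)$, so the envelope is surjective at $G/K$ and $\underline{M}^{(k+1)}(G/K) = 0$. Since no chain of subgroups of $G$ has length exceeding $L$, iterating at most $L-1$ times makes the resolution terminate, giving the bound.

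The main obstacle will be the bookkeeping in the inductive step: one must track simultaneously the vanishing of $\underline{M}^{(k)}$ below height $k$, the identification of $V_K^{(k)}$ with the full value $\underline{M}^{(k)}(G/K)$ at height $k$, and the $\Q(WK)$-module structure that Theorem \ref{injective envelope} requires at the next application. The subtle point is ensuring that no higher-height summand $\underline{I(V_H^{(k)})}$ with $H > K$ produces unexpected contributions at $G/K$, which it cannot because such $H$ does not conjugate into $K$. Once this compatibility between heights and Weyl actions is cleanly formulated, the dimension estimate is immediate.
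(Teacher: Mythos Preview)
The paper states this corollary without proof, leaving it as an immediate consequence of Theorem~\ref{injective envelope}. Your approach---iterating Doman's injective envelope and using a height function on the subgroup lattice to show that the successive cokernels $\underline{M}^{(k)}$ vanish on orbits $G/K$ with $\ell(K)<k$---is the natural and correct way to extract a dimension bound from that theorem, and the vanishing argument you sketch (in particular the observation that $\underline{I(V_H)}(G/K)=0$ unless a conjugate of $H$ lies in $K$) is exactly what is needed.

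Two remarks are in order. First, the bound your argument actually yields is that the global injective dimension is \emph{at most} $L-1$ (with $L$ counting the subgroups in a maximal chain), not strictly less than $L-1$ as the corollary is phrased. This is already sharp: for $G=C_2$ one has $L=2$, while $\underline{\1_{\{e\}}}$ has injective envelope $\underline{\Q}$ with cokernel the injective $\underline{\1_{C_2}}$, so its injective dimension is exactly $1=L-1$. The strict inequality in the paper's statement therefore appears to be a slip; your bound is the correct one. Second, the one step in your outline that needs an extra sentence is the claim that the envelope map is an \emph{isomorphism} at $G/K$ when $\ell(K)=k$. Injectivity is automatic from the envelope property, but for surjectivity you should invoke the explicit form of Doman's map: its component into $\underline{I(V_K)}$ is induced at level $G/K$ by a $\Q(WK)$-equivariant retraction of $\underline{M}^{(k)}(G/K)$ onto $V_K^{(k)}$, and since in your situation $V_K^{(k)}=\underline{M}^{(k)}(G/K)$ this retraction is the identity. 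Without this, an injective endomorphism of a possibly infinite-dimensional $\Q$-vector space need not be surjective, so a dimension-count alone does not close the argument.
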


Let $\underline{\Q}$ be the constant $\Q$ coefficient system.

\begin{lemma}
The constant coefficient system $\underline{\Q}$ is injective.
\end{lemma}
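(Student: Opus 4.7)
The plan is to identify the constant coefficient system $\underline{\Q}$ with a coefficient system of the form $\underline{I(V_H)}$ for a suitable choice of $H$ and $V_H$, and then invoke Lemma \ref{injecitveness} directly. The natural candidate is $H=\{e\}$ with $V_{\{e\}}=\Q$ regarded as the trivial module over $\Q(W\{e\})=\Q(G)$; this is consistent with the recipe preceding Theorem \ref{injective envelope}, since for $\underline{M}=\underline{\Q}$ all structure maps are the identity on $\Q$, forcing $V_H=0$ for every nontrivial $H$ and $V_{\{e\}}=\Q$.

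Concretely, the first step is to evaluate $\underline{I(\Q)}(G/K)$ at an arbitrary $G/K$. Since $(G/K)^{\{e\}}=G/K$ and $W\{e\}=G$, the definition gives
\[
\underline{I(\Q)}(G/K)=\Hom_{\Q(G)}(\Q(G/K),\Q).
\]
Writing $\Q(G/K)\cong \Q(G)\otimes_{\Q(K)}\Q$ and using the tensor-hom adjunction (Frobenius reciprocity), together with the fact that the $G$-action on $\Q$ is trivial, this hom space is one-dimensional: a $\Q(G)$-linear map $\varphi\colon \Q(G/K)\to \Q$ is determined by $\varphi(eK)\in\Q$, and any value is admissible. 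So $\underline{I(\Q)}(G/K)\cong \Q$ canonically, by evaluation at $eK$.

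The second step is to verify that the structure maps of $\underline{I(\Q)}$ agree, under these identifications, with the identity maps of the constant system $\underline{\Q}$. Given a $G$-map $f\colon G/K\to G/K'$ with $f(eK)=gK'$, the induced map $\overline{f}$ sends $eK$ to $gK'$, and precomposition sends $\psi\in\Hom_{\Q(G)}(\Q(G/K'),\Q)$ to $\psi\circ\overline{f}$, whose value at $eK$ is $\psi(gK')$. By $\Q(G)$-equivariance and the triviality of the action on $\Q$, $\psi(gK')=\psi(eK')$, so the induced map on coefficients is the identity $\Q\to\Q$. Hence the evaluations $\varphi\mapsto \varphi(eK)$ assemble into a natural isomorphism $\underline{I(\Q)}\xrightarrow{\cong}\underline{\Q}$.

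Combining these steps, $\underline{\Q}$ is isomorphic to $\underline{I(\Q)}$, which is injective by Lemma \ref{injecitveness}, and therefore $\underline{\Q}$ is injective in $\cg^{\Q}$. I do not expect any serious obstacle here: the only places to be careful are the correct identification of $\Q$ as a \emph{trivial} $\Q(G)$-module and the direction of the contravariant structure maps. Everything else is a direct unpacking of the definition of $\underline{I(V_H)}$ and a one-line application of Frobenius reciprocity.
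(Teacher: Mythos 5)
Your proof is correct and follows the same route as the paper: take $H=\{e\}$ with the trivial $\Q(G)$-module $V_{\{e\}}=\Q$, identify $\underline{I(\Q)}$ with $\underline{\Q}$, and apply Lemma \ref{injecitveness}. You simply supply more detail than the paper does, in particular the verification that the structure maps become identities under the evaluation isomorphism.
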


\begin{proof}
Take $H=\{e\}$, it follows that $WH=G$ and $(G/K)^H=G/K$. Then for trivial $WH$-module $V_H=\Q$, we have 
$$\Hom_{\Q(WH)}(\Q(G/K),V_H)=\Q.$$
Hence $\underline{I(V_H)}=\underline{\Q}$. 
\end{proof}

\begin{corollary}
The universal coefficient spectral sequence $$E_2^{p,q}=\Ext_{\cg^R}^{p,q}(\underline{H_*}(X), \underline{\Q})\Rightarrow H_G^n(X, \underline{\Q}),$$ collapses at the $E_2$-page. Therefore $$H^n_{G}(X,\underline{\Q})= \Hom_{\cg^{\Q}}(\underline{H_n}(X), \underline{\Q}).$$
\end{corollary}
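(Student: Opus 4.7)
The plan is to derive the collapse directly from the injectivity of $\underline{\Q}$ established in the preceding lemma. Recall the general fact that in an abelian category with enough injectives, if $I$ is injective then $I$ itself serves as an injective resolution concentrated in degree zero, so $\Ext^{p}(-, I) = 0$ for every $p \geq 1$. Since the lemma just proved shows $\underline{\Q}$ is injective in $\cg^{\Q}$, this vanishing applies with $I = \underline{\Q}$.

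The key steps I would carry out are the following. First, invoke Theorem \ref{universal coefficient spectral sequence} to set up the universal coefficient spectral sequence
$$E_2^{p,q} = \Ext_{\cg^{\Q}}^{p,q}(\underline{H_*}(X), \underline{\Q}) \Rightarrow H_G^{p+q}(X, \underline{\Q}).$$
Second, observe that by the injectivity argument above, $E_2^{p,q} = 0$ whenever $p \geq 1$. Thus the entire $E_2$-page is concentrated in the single column $p = 0$, which forces all differentials $d_r$ for $r \geq 2$ to vanish (each has either source or target in the zero region), so $E_2 = E_\infty$. Third, since only one column survives, the induced filtration on $H_G^n(X, \underline{\Q})$ has a unique nontrivial quotient, namely $E_\infty^{0,n} = E_2^{0,n}$, which equals $\Hom_{\cg^{\Q}}(\underline{H_n}(X), \underline{\Q})$ by definition of $\Ext^0$. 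This yields the claimed identification.

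There is essentially no obstacle here beyond properly invoking the preceding lemma; the only bookkeeping I would take care to address explicitly is the absence of extension problems, which is automatic when a cohomological spectral sequence collapses to a single column. Stylistically I would likely present this as a three-line proof rather than a lengthy argument, since all the substantive work was absorbed into the injectivity lemma.
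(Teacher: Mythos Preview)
Your proposal is correct and matches the paper's approach: the corollary is stated immediately after the lemma that $\underline{\Q}$ is injective and is left without an explicit proof, so the intended argument is exactly the one you give---injectivity forces the higher $\Ext$ groups to vanish, the spectral sequence degenerates to a single line, and the identification with $\Hom$ follows. One minor bookkeeping remark: in the paper's later explicit computations the convention is $E_2^{p,q}=\Ext^q_{\cg^\Q}(\underline{H_p}(X),\underline{M})$, so the surviving line is the row $q=0$ rather than the column $p=0$; this does not affect the substance of your argument.
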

\section{Ordered configuration spaces of Euclidean spaces}

In this section we will introduction the classical computation on the homology and cohomology of configuration spaces. For more details , the reader can refer to \cite{Cohen95}, \cite{CLM76}.

\begin{definition}
Given a space $M$, the ordered configuration space of $q$-tuples of distinct points in $M$ is $$\Conf(M,q)=\{(x_1,\cdots,x_q)\in M^q \mid x_i \neq x_j, \text{for } i\neq j\}.$$
\end{definition}

\begin{example}
When $M=\R^2$, $\Conf(R^2, q)$ is an Eilenberg-Mac Lane space of type $K(B_q, 1)$ where $B_q$ is the Artin's Braid groups on $q$ strands. 
\end{example}

We will mainly focus on the case $M=R^n$ in order to compute the Bredon cohomology of equivariant configuration spaces. The following theorem give the structure of integral cohomology ring of $\Conf(\R^n,q)$

\begin{theorem}[\cite{CLM76}]
The integral cohomology ring of configuration spaces $\Conf(\R^n,q)$ is given by the following data: 
\begin{enumerate}[label=\roman*)]
\item
For $n=1$, $\Conf(\R, q)$ is homotopy equivalent to the symmetric group of degree $q$ as a discrete space, or in other word, a finite discrete space with $q!$ points.
\item
For $n\geq 2$, the integral cohomology ring $H^*(\Conf(\R^n,q))$ is generated by elements $$A_{i,j}, 1\leq j<i\leq q.$$
where $A_{i,j}$ is of degree $n-1$ and the relations are given by 
\begin{enumerate}[label=\arabic*)]
\item
$A_{i,j}^2=0$.
\item
$A_{i,j}A_{i,k}=A_{k,j}(A_{i,k}-A_{i,j})$ for $j\leq k$; and 
\item
associativity and graded commutativity.
\end{enumerate}
\end{enumerate}
\end{theorem}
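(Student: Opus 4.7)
The plan is to handle the case $n=1$ directly and then prove the higher-dimensional case by induction on $q$, using the Fadell--Neuwirth fibration together with the classical construction of the $A_{i,j}$ as pullbacks of the fundamental class of $S^{n-1}$ under Gauss maps.

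For $n=1$, I would argue that $\Conf(\R,q)$ decomposes as a disjoint union of open sets indexed by the orderings of its $q$ coordinates. Each such open set is defined by strict linear inequalities $x_{\sigma(1)}<x_{\sigma(2)}<\cdots<x_{\sigma(q)}$ for some permutation $\sigma\in\Sigma_q$, and is therefore a convex, hence contractible, subset of $\R^q$. This gives the desired homotopy equivalence with the discrete space on $q!$ points.

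For $n\geq 2$, I would proceed by induction on $q$, using the Fadell--Neuwirth fibration
\[
\Conf(\R^n,q)\longrightarrow\Conf(\R^n,q-1),\qquad (x_1,\ldots,x_q)\mapsto(x_1,\ldots,x_{q-1}),
\]
whose fiber over $(x_1,\ldots,x_{q-1})$ is $\R^n\setminus\{x_1,\ldots,x_{q-1}\}\simeq\bigvee_{q-1}S^{n-1}$. For the base case $q=1$ the space $\Conf(\R^n,1)=\R^n$ is contractible. In the inductive step I would run the Serre spectral sequence with integer coefficients; because the fiber has cohomology concentrated in degrees $0$ and $n-1$, the only possible differential is $d_{n}$, and I would show this differential vanishes by exhibiting explicit cohomology classes on $\Conf(\R^n,q)$ that restrict to generators on each fiber. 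These classes are produced by the Gauss maps
\[
\pi_{i,j}\colon\Conf(\R^n,q)\to S^{n-1},\qquad (x_1,\ldots,x_q)\mapsto\frac{x_i-x_j}{\lvert x_i-x_j\rvert},
\]
and $A_{i,j}$ is defined as $\pi_{i,j}^{*}(\omega)$ where $\omega\in H^{n-1}(S^{n-1};\Z)$ is the fundamental class. Collapse of the spectral sequence then gives that $H^{*}(\Conf(\R^n,q);\Z)$ is a free module over $H^{*}(\Conf(\R^n,q-1);\Z)$ of rank equal to the rank of $H^{*}(\bigvee_{q-1}S^{n-1};\Z)$, with basis $1,A_{q,1},\ldots,A_{q,q-1}$.

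To finish, I would verify the relations. The relation $A_{i,j}^{2}=0$ is automatic from $\alpha^{2}=0$ in $H^{*}(S^{n-1};\Z)$ combined with naturality of cup product, since $A_{i,j}^{2}=\pi_{i,j}^{*}(\omega^{2})$. Graded commutativity is formal. The main obstacle is the Arnold three-term relation $A_{i,j}A_{i,k}=A_{k,j}(A_{i,k}-A_{i,j})$ for $j<k<i$: this has to be checked inside $\Conf(\R^n,3)$ and then transported to arbitrary $q$ by pulling back along the coordinate-forgetting map $\Conf(\R^n,q)\to\Conf(\R^n,3)$ that retains coordinates $i,j,k$. For $\Conf(\R^n,3)$ itself the Fadell--Neuwirth fibration yields the homotopy equivalence with $\bigvee S^{n-1}\times S^{n-1}$ up to a known wedge, whose cohomology is small enough to check the identity directly. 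Once the relation holds in $\Conf(\R^n,3)$, its pullback yields the relation for all $q$. The final step is a rank count: the prescribed relations reduce any monomial in the $A_{i,j}$ to a sum of standard monomials indexed by so-called admissible sequences, and comparing this count with the rank computed by the collapsed spectral sequence shows that these are the only relations.
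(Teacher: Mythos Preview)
The paper does not actually prove this theorem; it is stated as a background result with a citation to \cite{CLM76} and no argument is given. So there is no ``paper's own proof'' to compare against.

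Your outline is the standard route to this classical computation (Fadell--Neuwirth fibration, Gauss-map classes $A_{i,j}=\pi_{i,j}^{*}\omega$, Leray--Hirsch collapse of the Serre spectral sequence, reduction of the three-term relation to $q=3$, and a final rank count), and it is correct in structure. The one place your sketch is genuinely vague is the verification of the Arnold relation in $\Conf(\R^n,3)$: that space is not literally homotopy equivalent to a product or wedge of spheres in a way that makes the identity immediate, so saying the cohomology is ``small enough to check the identity directly'' hides the actual work. The usual arguments either (i) produce an explicit geometric null-cobordism for the cycle dual to $A_{i,j}A_{i,k}-A_{k,j}(A_{i,k}-A_{i,j})$, or (ii) observe that the symmetrized expression factors through a map $\Conf(\R^n,3)\to S^{n-1}\times S^{n-1}$ (e.g.\ via $(\pi_{i,j},\pi_{i,k})$ after a suitable change of variables) whose image lies in a subspace where the corresponding product vanishes, or (iii) in the case $n=2$ use Arnold's original complex-analytic identity for the logarithmic forms $d\log(z_i-z_j)$ and then transport it to higher $n$ by the fact that the inclusion $\Conf(\R^2,3)\hookrightarrow\Conf(\R^n,3)$ induces a ring map detecting the relation. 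You should pick one of these and make it explicit; the rest of your plan is sound.
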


\begin{corollary}
\label{homology of configuration spaces}
\begin{enumerate}[label=\roman*)]
\item
Let $n\geq 2$, for $k=1,\cdots, q-1$, $H^{k(n-1)}(\Conf(\R^n, q))$, as an abelian group, has a basis $$\{A_{i_1,j_1}\cdots A_{i_k, j_k}\mid i_1<i_2<\cdots <i_k, \text{and } j_l<i_l \text{ for } l=1,2,\cdots, k\}.$$
\item
There is an abelian group isomorphism $$H^k(\Conf(\R^n, q))\cong H_k(\Conf(\R^n, q)).$$
for each $k$.
\end{enumerate}
\end{corollary}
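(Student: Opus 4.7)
The plan is to derive (i) as a combinatorial consequence of the generators-and-relations presentation in the preceding theorem, and then to obtain (ii) as an immediate corollary via the Universal Coefficient Theorem.

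For (i), I would begin with a spanning argument by a straightening procedure. Any monomial in the $A_{i,j}$ of total degree $k(n-1)$ is a product of exactly $k$ generators $A_{i_1,j_1}\cdots A_{i_k,j_k}$. Whenever two factors share a common first index, apply the relation $A_{i,j}A_{i,k}=A_{k,j}A_{i,k}-A_{k,j}A_{i,j}$ (for $j<k<i$), together with $A_{i,j}^2=0$ in the coincident case. Each application rewrites the monomial as an integral combination of monomials whose multiset of first indices is strictly smaller in the lexicographic order on nonincreasing rearrangements, so the procedure terminates. Graded commutativity then reorders the surviving factors so that $i_1<i_2<\cdots<i_k$, exhibiting the listed set as a spanning set of $H^{k(n-1)}(\Conf(\R^n,q))$.

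To promote spanning to a basis, I would count. The Fadell--Neuwirth fibrations
$$\Conf(\R^n,q)\longrightarrow \Conf(\R^n,q-1)$$
have fiber $\R^n$ minus $q-1$ points, which is homotopy equivalent to a wedge of $q-1$ copies of $S^{n-1}$ and hence has torsion-free homology. An induction on $q$ via the Serre spectral sequence then forces $H_*(\Conf(\R^n,q);\Z)$ to be free abelian with Poincar\'e polynomial
$$P(\Conf(\R^n,q);t)=\prod_{\ell=1}^{q-1}\bigl(1+\ell\,t^{n-1}\bigr).$$
The coefficient of $t^{k(n-1)}$ counts the pairs consisting of a subset $\{i_1<\cdots<i_k\}\subseteq\{2,\ldots,q\}$ together with indices $j_\ell\in\{1,\ldots,i_\ell-1\}$, which is exactly the cardinality of the proposed basis. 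Since $H^{k(n-1)}$ is free abelian of that rank and our spanning set has exactly that size, it must be a $\Z$-basis.

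Part (ii) is then immediate: (i) shows each nonzero cohomology group is a finitely generated free abelian group, so the Ext contribution in the Universal Coefficient Theorem vanishes and $H_k\cong \Hom(H^k,\Z)\cong H^k$ in every degree (the case $n=1$ reduces to the trivial computation for a finite discrete space). The main obstacle I anticipate is the termination of the straightening step: one must pin down an explicit well-founded complexity measure on monomials that strictly decreases under every application of the Arnold-type relation, which, while standard, is the one piece of the argument that is not pure bookkeeping.
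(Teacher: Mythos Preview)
Your argument is correct. The paper itself offers no proof of this corollary: it is recorded as an immediate consequence of the preceding Cohen--Lada--May presentation of $H^*(\Conf(\R^n,q))$ (cited from \cite{CLM76}), and the Poincar\'e series you invoke is stated separately as Proposition~\ref{poincare series}, again by citation only. Your write-up therefore supplies precisely the details that the paper leaves to the references.

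Two minor remarks on execution. The termination measure you propose---the nonincreasing rearrangement of the multiset of first indices, compared lexicographically---does the job: applying the Arnold relation to $A_{i,j}A_{i,k}$ with $j<k<i$ replaces one occurrence of $i$ in that multiset by $k<i$, and replacing an element of a finite multiset by a strictly smaller one always strictly decreases the sorted sequence in lexicographic order; so the obstacle you flag at the end is not a genuine one. For the rank count, if you want the Fadell--Neuwirth argument to be self-contained rather than an appeal to Proposition~\ref{poincare series}, the cleanest way to force collapse of the Serre spectral sequence (and to obtain integral freeness simultaneously) is Leray--Hirsch: the classes $A_{q,1},\ldots,A_{q,q-1}$ restrict to a $\Z$-basis of the cohomology of the fiber $\R^n\setminus\{q-1\text{ points}\}\simeq\bigvee_{q-1}S^{n-1}$. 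Part~(ii) via the Universal Coefficient Theorem is exactly as you say.
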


\begin{example}
The rank of $H^{n-1}(\Conf(\R^n,q))$ is $\frac{q(q-1)}{2}$ and the rank of $H^{(q-1)(n-1)}(\Conf(\R^n,q))$ is $(q-1)!$.
\end{example}

We can compute the rank of a general degree using Poincar\'e series. 
\begin{proposition}[\cite{BCT89}]
\label{poincare series}
The Poincar\'e series of $H^{\ast}(\Conf(\R^n,q))$ is given by $$\prod_{m=1}^{q-1}(1+mt^{n-1}).$$
Hence rank of $H^{m(n-1)}(\Conf(\R^n,q))$ is $$\sum_{i_1<i_2<\cdots<i_m}i_1\cdot i_2\cdot \cdots \cdot i_m.$$
\end{proposition}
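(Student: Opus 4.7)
The plan is to read off the Poincar\'e series directly from the additive basis supplied by Corollary~\ref{homology of configuration spaces}(i), so no further topological input is needed beyond the cohomology ring computed in the previous theorem. Because $H^{\ast}(\Conf(\R^n,q))$ is concentrated in degrees that are multiples of $n-1$ and the basis elements are in bijection with certain labelled subsets, the whole computation reduces to counting those subsets by a clean generating-function argument.

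First I would recall that a basis element of degree $k(n-1)$ is specified by a strictly increasing tuple $(i_1,\dots,i_k)$ drawn from $\{2,\dots,q\}$ together with an independent choice of $j_l\in\{1,\dots,i_l-1\}$ for each $l$. I would then assemble the Poincar\'e series as
$$P(t)=\sum_{k\geq 0}\Bigl(\sum_{i_1<\cdots<i_k}\prod_{l=1}^{k}(i_l-1)\Bigr)t^{k(n-1)},$$
where the inner sum ranges over strictly increasing $k$-tuples in $\{2,\dots,q\}$ and the factor $(i_l-1)$ accounts for the allowable choices of $j_l$.

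Next I would observe that the sum factorizes by treating each index $i\in\{2,\dots,q\}$ independently: either $i$ does not occur among the $i_l$'s (contributing a factor of $1$) or $i$ does occur, in which case there are $(i-1)$ possible partners $j_l$ (contributing $(i-1)t^{n-1}$). This yields
$$P(t)=\prod_{i=2}^{q}\bigl(1+(i-1)t^{n-1}\bigr)=\prod_{m=1}^{q-1}(1+mt^{n-1}),$$
after the reindexing $m=i-1$. Extracting the coefficient of $t^{m(n-1)}$ from this product gives the elementary symmetric polynomial $e_m(1,2,\dots,q-1)=\sum_{i_1<\cdots<i_m}i_1 i_2\cdots i_m$, which is precisely the stated rank formula.

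Since the entire argument is a bookkeeping exercise on the explicit basis, I do not expect a genuine obstacle. The only subtle point is the indexing convention: the natural product runs over $i\in\{2,\dots,q\}$, and the shift $m=i-1$ is needed to recover the statement. As a sanity check one recovers the known ranks from the earlier example, namely $q(q-1)/2$ in degree $n-1$ and $(q-1)!$ in top degree $(q-1)(n-1)$. An alternative proof via the Fadell--Neuwirth fibration $\Conf(\R^n,q)\to\Conf(\R^n,q-1)$ with fibre $\R^n\setminus\{q-1\text{ points}\}\simeq\bigvee_{q-1}S^{n-1}$ would also deliver the new factor $(1+(q-1)t^{n-1})$ by induction once one verifies that the Serre spectral sequence collapses, but the combinatorial route above is shorter given the basis already established.
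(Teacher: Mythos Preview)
Your argument is correct: the explicit basis from Corollary~\ref{homology of configuration spaces}(i) makes the Poincar\'e series a pure counting problem, and your factorization over $i\in\{2,\dots,q\}$ is exactly the right way to read off $\prod_{m=1}^{q-1}(1+mt^{n-1})$. The sanity checks against $q(q-1)/2$ and $(q-1)!$ confirm the indexing is consistent.

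Note, however, that the paper does not supply its own proof of this proposition; it is quoted from \cite{BCT89} without argument. So there is no in-paper proof to compare against. Your derivation is a clean, self-contained justification that fits naturally after the basis description already given, and is in fact shorter than the fibration approach you sketch as an alternative (which is closer in spirit to how \cite{BCT89} organizes things). If anything, your write-up would strengthen the exposition here, since the paper otherwise leaves the reader to look up the reference.
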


\begin{proposition}
\label{trivial inclusion}
For $m<n$, let $\iota: \R^m \rightarrow \R^n$ be a linear embedding. It induces a natural embedding of configuration spaces $$\overline{\iota}: \Conf(\R^m, q)\rightarrow \Conf(R^n,q).$$ Moreover the induced map on homology $$\overline{\iota}_*: H_*(\Conf(\R^m, q))\rightarrow H_*(\Conf(R^n,q))$$ is trivial.
\end{proposition}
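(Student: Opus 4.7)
The plan is to prove the cohomological statement first: the induced map
$$\overline{\iota}^{*} : H^{*}(\Conf(\R^{n}, q)) \longrightarrow H^{*}(\Conf(\R^{m}, q))$$
vanishes in positive degrees, and then dualize to homology via the universal coefficient theorem. The latter step is legitimate because, by Corollary~\ref{homology of configuration spaces}, both $H_{k}$ and $H^{k}$ of these configuration spaces are finitely generated free abelian groups in every degree. If $m = 1$ there is nothing to prove in positive degrees since $\Conf(\R, q)$ is then a finite discrete space.

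The first step is to recall the geometric description of the ring generators $A_{i,j}$ of $H^{*}(\Conf(\R^{n},q))$: each $A_{i,j}$ is the pullback of the fundamental class of $S^{n-1}$ under the Gauss-type map
$$\pi^{n}_{i,j} : \Conf(\R^{n}, q) \longrightarrow S^{n-1}, \qquad (x_{1}, \ldots, x_{q}) \longmapsto \frac{x_{i} - x_{j}}{\lvert x_{i} - x_{j} \rvert}.$$
This construction is functorial in the ambient linear space: the embedding $\iota : \R^{m} \to \R^{n}$ gives a commutative square whose top edge is $\overline{\iota}$, whose vertical arrows are $\pi^{m}_{i,j}$ and $\pi^{n}_{i,j}$, and whose bottom edge is the inclusion $S^{m-1} \hookrightarrow S^{n-1}$ induced by $\iota$ restricted to unit vectors.

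The second step observes that this inclusion $S^{m-1} \hookrightarrow S^{n-1}$ is null-homotopic whenever $m - 1 < n - 1$, since it misses a point of $S^{n-1}$ and therefore factors through the contractible complement $S^{n-1} \setminus \{\mathrm{pt}\} \simeq \R^{n-1}$. Its action on $H^{n-1}$ is therefore zero, and the commutative square forces $\overline{\iota}^{*} A_{i,j} = 0$ for every pair $j < i$. Because the classes $A_{i,j}$ generate $H^{*}(\Conf(\R^{n}, q))$ as a graded ring and all lie in the strictly positive degree $n-1$, the ring homomorphism $\overline{\iota}^{*}$ must vanish on every positive-degree class.

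The final step is to pass back to homology: freeness in each degree gives a natural isomorphism $H^{k}(-) \cong \Hom(H_{k}(-), \Z)$, under which $\overline{\iota}^{*}$ is identified with the dual of $\overline{\iota}_{*}$. Vanishing of $\overline{\iota}^{*}$ in positive degrees therefore implies vanishing of $\overline{\iota}_{*}$ in positive degrees, as claimed. I expect the only real obstacle to be bookkeeping: one must be careful that $A_{i,j}$ really is the pullback of the $S^{n-1}$ fundamental class under $\pi^{n}_{i,j}$ (as opposed to some other convention) and that the Gauss maps are genuinely natural with respect to linear embeddings; once both are confirmed, the argument is immediate.
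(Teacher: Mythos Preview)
Your argument is correct and uses the same geometric core as the paper: the commutative square built from the Gauss maps $\pi_{i,j}$ to spheres, together with the fact that $S^{m-1}\hookrightarrow S^{n-1}$ is null-homotopic. The paper simply draws the homology version of this square and observes that the right-hand vertical map is zero.

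The one noteworthy difference is that you work in cohomology and invoke the ring structure: once $\overline{\iota}^{*}A_{i,j}=0$ for the degree-$(n-1)$ generators, multiplicativity kills $\overline{\iota}^{*}$ in every positive degree, after which you dualize. The paper's homology diagram, taken at face value, only directly shows that the image of $\overline{\iota}_{*}$ lies in the common kernel of the maps $(\pi^{n}_{i,j})_{*}$, which controls degree $n-1$ but says nothing by itself about degrees $2(n-1),3(n-1),\ldots$, where the Gauss maps land in $H_{>n-1}(S^{n-1})=0$. Your detour through cohomology is exactly what closes that gap cleanly; it is the same idea, executed more carefully.
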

\begin{proof}
It is directly from the following commutative diagram. 
\begin{figure}[H]
\centering
\begin{tikzpicture}[scale=1]
\node[] at (0,0) {$H_*(\Conf(R^m,q))$};
\node[] at (0,2) {$H_*(\Conf(R^n,q))$};
\node[] at (6,0) {$H_*(S^{m-1})$};
\node[] at (6,2) {$H_*(S^{n-1})$};
\node[above] at (3.5,2) {$A_{i,j}$};
\node[right] at (6,1) {$0$};
\node[above] at (3.5,0) {$A_{i,j}$};
\node[left] at (0,1) {};
\draw[ultra thick, <-](0,1.7)--(0,0.3);
\draw[ultra thick, <-](6,1.7)--(6,0.3);
\draw[ultra thick, ->](2,0)--(5,0);
\draw[ultra thick, ->](2,2)--(5,2);
\end{tikzpicture}
\end{figure}
The two horizontal maps are surjections given by cohomology class $A_{i,j}$ for any $1\leq j<i\leq q$.
  
\end{proof}
\section{Fixed point sets of equivariant configuration spaces}

\begin{definition}
Let $G$ be a finite group, for an $\R$-linear representation $V$, the equivariant configuration space of $V$ is the space $\Conf(V,q)$ with the diagonal $G$-action.
\end{definition}

\begin{lemma}
\label{fixed point set of ecs}
For any subgroup $H<G$, the H-fixed point $\Conf(V,q)^H$ is $\Conf(V^H,q)$.
\end{lemma}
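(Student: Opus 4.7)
The proof should be essentially an unpacking of definitions, so the ``proposal'' is almost a full proof in outline. The key observation is that the $G$-action on $\Conf(V,q)$ is diagonal in the sense of acting on each coordinate slot separately (i.e.\ $g\cdot(x_1,\dots,x_q)=(gx_1,\dots,gx_q)$), and does not permute coordinates. The plan is therefore to write the $H$-fixed point condition out explicitly and read off both the ``$\subseteq$'' and ``$\supseteq$'' containments.

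Concretely, I would fix $H<G$ and argue as follows. A configuration $(x_1,\dots,x_q)\in\Conf(V,q)$ lies in $\Conf(V,q)^H$ if and only if $(hx_1,\dots,hx_q)=(x_1,\dots,x_q)$ for every $h\in H$, which holds if and only if $hx_i=x_i$ for every $h\in H$ and every $i=1,\dots,q$. By definition of the fixed subspace, this is equivalent to $x_i\in V^H$ for every $i$. Since the condition $x_i\neq x_j$ for $i\neq j$ is inherited from $\Conf(V,q)$ and means exactly the same thing whether the $x_i$'s are viewed in $V$ or in $V^H\subseteq V$, the set of such configurations is precisely $\Conf(V^H,q)$. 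Thus $\Conf(V,q)^H=\Conf(V^H,q)$ as subspaces of $V^q$ (equivalently of $(V^H)^q$), and the subspace topology agrees on both sides because $V^H$ carries the subspace topology from $V$.

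There is no real obstacle here; the only thing worth emphasizing, to avoid any confusion with the typical symmetric group action on configuration spaces that does permute coordinates, is that the $G$-action under consideration is the diagonal linear action arising from the $\R$-linear representation structure on $V$, not an action that relabels the points. Once this is clear, the lemma is immediate from the definitions of $\Conf(-,q)$ and of the fixed subspace $V^H$.
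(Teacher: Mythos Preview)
Your proposal is correct and follows essentially the same approach as the paper's own proof: both unwind the diagonal action to see that $(x_1,\dots,x_q)$ is $H$-fixed if and only if each $x_i\in V^H$. If anything, your version is slightly more complete, since you explicitly note the reverse inclusion and the compatibility of the subspace topologies, whereas the paper only spells out the forward direction.
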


\begin{proof}
For any $h\in H$ and any point $(x_1,\cdots,x_q)\in \Conf(V,q)^H$,
$$(x_1,\cdots,x_q)=h.(x_1,\cdots,x_q)=(h.x_1,\cdots,h.x_q).$$ Hence $h.x_i=x_i$ for each $i$. Then $x_i\in V^H$.
\end{proof}

Since $V$ is a linear representation, the fixed point set $V^H$ is a linear subspace of $V$. Moreover, if given $K$ and $H$ are two subgroups of $G$ such that $K<H<G$. Then the the embedding of fixed point sets $\Conf(V,q)^H\hookrightarrow \Conf(V,q)^K$ is induced by the embedding $V^H\hookrightarrow V^K$. By Proposition \ref{trivial inclusion}, the embedding is trivial on homology as long as the embedding $V^H\hookrightarrow V^K$ is proper. 

If $V=\R[G]$ is the regular representation of $G$. The above argument leads to an important decomposition of $\underline{H_*}(\Conf(V,q))$, the homology Bredon coefficent system of $\Conf(V,q)$. To state the decompostion theorem, we need to introduce the following class of ``1-dimensional'' coefficient system.

\begin{definition}
The Bredon coefficient system $\underline{\1_H}$ over $\Q$ is given by 
\[
\underline{\1_H}(G/K)=\left\{
\begin{array}{ll}
\Q,& \text{if $K$ congugates to $H$,}\\
0, & \text{otherwise. }
\end{array}\right.
\]
\end{definition}

\begin{theorem}
\label{decompostion of homology bcs}
For a finite group $G$, $V=\R[G]$ is the regular $\R$-linear representation of $G$. We have the following decomposition of the rational homology Bredon coefficent system of $\Conf(V,q)$,
\begin{enumerate}[label=\roman*)]
\item
For $n >0$, $$\underline{H_n}(\Conf(V,q))=\bigoplus_{H<G}\beta_{H,n}\underline{\1_H}$$ where
$\beta_{H,n}$ is the $n$-th Betti number of $\Conf(V^H,q)$.
\item
For $n=0$, $$\underline{H_0}(\Conf(V,q))=\underline{\Q}\oplus(q!-1)\underline{\1_{\{e\}}}$$
where $\underline{\Q}$ is the constant $\Q$ coefficient system.
\end{enumerate}
\end{theorem}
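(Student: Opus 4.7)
The plan is to evaluate $\underline{H_n}(\Conf(V,q))$ pointwise at each orbit $G/K$ using Lemma~\ref{fixed point set of ecs}, and then to kill all structure maps between non-conjugate subgroups in positive degree by applying Proposition~\ref{trivial inclusion}.

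First, Lemma~\ref{fixed point set of ecs} identifies $\underline{H_n}(\Conf(V,q))(G/K) = H_n(\Conf(V^K,q))$. For the regular representation $V = \R[G]$, the fixed subspace $V^K$ is canonically isomorphic to $\R[K\backslash G]$, a vector space of dimension $[G:K]$. Hence, by Corollary~\ref{homology of configuration spaces}, the rank of this value of the coefficient system is the Betti number $\beta_{K,n}$. Moreover, any morphism $G/K \to G/H$ in $\og$ that is not an isomorphism lands in a distinct conjugacy class, and the induced map on fixed subspaces is a proper linear inclusion $V^H \subsetneq V^K$, strict because $[G:H] < [G:K]$.

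For $n > 0$, Proposition~\ref{trivial inclusion} then shows that the induced map $H_n(\Conf(V^H,q)) \to H_n(\Conf(V^K,q))$ vanishes. Consequently the coefficient system $\underline{H_n}(\Conf(V,q))$ splits as a direct sum of blocks indexed by conjugacy classes of subgroups, the block at $G/H$ having underlying vector space $\Q^{\beta_{H,n}}$. To finish part (i) one identifies each such block with $\beta_{H,n}$ copies of $\underline{\1_H}$. I expect this last identification to be the main obstacle: the residual self-morphisms of $G/H$ in $\og$ are the $WH$-action, and $WH$ acts on $V^H \cong \R[H\backslash G]$ by permutations of the basis; an element of odd parity pulls back each generator $A_{i,j}$ of $H^*(\Conf(V^H,q))$ to its negative. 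Reconciling this sign character with the trivial-action convention of $\underline{\1_H}$ requires a careful analysis of signs across the multiplicative structure of the cohomology ring described in Corollary~\ref{homology of configuration spaces}.

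For $n = 0$, Proposition~\ref{trivial inclusion} no longer applies and a direct analysis is needed. Since $\dim V^K = [G:K] \geq 2$ for every subgroup except the unique conjugacy class with $[G:K] = 1$, the space $\Conf(V^K,q)$ is connected on all but one orbit. This yields a diagonal inclusion of the constant coefficient system $\underline{\Q}$ into $\underline{H_0}(\Conf(V,q))$. At the exceptional orbit, $V^K \cong \R$ and $\Conf(\R,q)$ has $q!$ path components; the extra $q!-1$ components are supported on that single conjugacy class, which produces the remaining summand of the form $(q!-1)\underline{\1_{\{e\}}}$ after identification.
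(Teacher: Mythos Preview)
Your approach is exactly the paper's: identify the fixed points via Lemma~\ref{fixed point set of ecs}, observe that $V^H=\R^{[G:H]}$ is a \emph{proper} subspace of $V^K$ whenever $K\lneq H$, and then invoke Proposition~\ref{trivial inclusion} to kill all structure maps between distinct conjugacy classes in positive degree. The paper's proof says nothing beyond this.

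The obstacle you flag---the residual $WH$-action on $H_n(\Conf(V^H,q))$---is genuine, and the paper simply does not address it. Your analysis is correct: $WH$ acts on $V^H\cong\R[H\backslash G]$ by (free) coset permutations, and an element acting with odd sign negates each $A_{i,j}$, so the action on $H^{k(n-1)}$ is by the $k$-th power of the sign character. In particular this action is \emph{not} always trivial (e.g.\ $G=C_2$, $H=\{e\}$, degree~$1$), so the identification with copies of $\underline{\1_H}$---if that symbol carries the trivial $WH$-action---fails in general. The paper's later computations for $G=D_8$ happen to be unaffected because every element of $D_8$ (and of each $WH$) acts on the relevant coset set with even sign, but you are right that a complete proof must either verify this or refine the statement to allow a twist by the sign character.

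One small inconsistency in your part~(ii): you correctly identify the exceptional orbit as the one with $[G:K]=1$, i.e.\ $K=G$, so the extra $q!-1$ classes are supported at $G/G$. The summand should therefore be $(q!-1)\underline{\1_G}$, not $(q!-1)\underline{\1_{\{e\}}}$. The theorem as printed has this typo; the worked example (Table~\ref{Decomposition of Conf(V,3)}, where the $n=0$ entry reads $\underline{\Q}\oplus 5\underline{\1_7}$ with $7$ labeling $G/G$) confirms that $\underline{\1_G}$ is what is meant. Trust your own argument here.
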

\begin{proof}
For any subgroup $H$, $V^H=\R^{[G:H]}$. In addition, if $K$ is a proper subgroup of $H$, $V^H$ is a proper subspace of $V^K$. Hence when $n>0$, by Proposition \ref{trivial inclusion}, the connection maps of $\underline{H_k}(\Conf(V,q))$ are all trivial except those isomorphisms induced by conjugation. Therefore $\underline{H_k}(\Conf(V,q))$ could be written as the direct sum of $\1_H$'s. The case when $n=0$ is a direct computation.
\end{proof}

\begin{corollary}
If $V$ is finitely generated free representation, i.e. $V=(\R[G])^s$ for some $s>1$.
Then
\[
\underline{H_n}(\Conf(V,q))=\left\{
\begin{array}{ll}
\bigoplus_{H<G}\beta_{H,n}\underline{\1_H}, & \text{for } n>0\\
\underline{\Q}, & \text{for } n=0
\end{array}\right.
\]
\end{corollary}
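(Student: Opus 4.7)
The plan is to parallel the proof of Theorem \ref{decompostion of homology bcs} almost verbatim for $n>0$, while using the hypothesis $s>1$ to collapse the extra summand that appeared at $n=0$ in that theorem.

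First I would record the fixed-point subspaces. Combining Lemma \ref{fixed point set of ecs} with the identification $(\R[G])^H\cong\R^{[G:H]}$ gives $V^H\cong\R^{s[G:H]}$; the key observation is that $s>1$ forces $\dim V^H\geq 2$ for \emph{every} subgroup $H<G$, including $H=G$. Consequently $\Conf(V^H,q)$ is path-connected for every $H$, in contrast with the regular-representation case where $\Conf(V^G,q)=\Conf(\R,q)$ was a $q!$-point discrete space.

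For $n>0$, I would reproduce the argument of Theorem \ref{decompostion of homology bcs} verbatim. Whenever $K$ is a proper subgroup of $H$, the inclusion $V^H\hookrightarrow V^K$ is a proper linear inclusion of Euclidean spaces of dimension at least $2$, so Proposition \ref{trivial inclusion} applies and the induced map on $H_n(\Conf(\cdot,q))$ vanishes. The only remaining nontrivial structure maps in $\og$ are the conjugation isomorphisms, and the same reasoning as in Theorem \ref{decompostion of homology bcs} then forces the decomposition $\underline{H_n}(\Conf(V,q))=\bigoplus_{H<G}\beta_{H,n}\underline{\1_H}$ with $\beta_{H,n}:=\dim_{\Q}H_n(\Conf(V^H,q))$.

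For $n=0$, because each $\Conf(V^H,q)$ is path-connected, we have $H_0(\Conf(V^H,q))\cong\Q$ for every $H$, and each inclusion $V^H\hookrightarrow V^K$ induces the identity $\Q\to\Q$ on $H_0$, since a single configuration in $V^H$ remains a single configuration in $V^K$; the conjugation automorphisms likewise act trivially on $H_0$. Hence every structure map in $\underline{H_0}(\Conf(V,q))$ is an identity, and the system is the constant coefficient system $\underline{\Q}$. I do not anticipate any genuine obstacle, since all the technical content is already captured in Proposition \ref{trivial inclusion}; the substantive point of the corollary is simply that enlarging $V$ to $(\R[G])^s$ with $s\geq 2$ eliminates the one-dimensional fixed subspace $V^G$ that was responsible for the extra $(q!-1)\underline{\1_{\{e\}}}$ summand in Theorem \ref{decompostion of homology bcs}.
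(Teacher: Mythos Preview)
Your proof is correct and is exactly the argument the paper intends: the corollary is stated in the paper without proof, as an immediate consequence of Theorem~\ref{decompostion of homology bcs}, and your extension of that argument---using $\dim V^H = s[G:H]\geq 2$ to make every $\Conf(V^H,q)$ path-connected and hence collapse the $n=0$ case to $\underline{\Q}$---is precisely what is implicit there. One small remark: the extra summand you refer to from Theorem~\ref{decompostion of homology bcs} is supported at the top orbit $G/G$ (as the example for $D_8$ confirms, where it appears as $5\underline{\1_7}$), so it would be more accurate to write $(q!-1)\underline{\1_G}$ rather than $(q!-1)\underline{\1_{\{e\}}}$, though this does not affect your argument.
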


\begin{example}
Let $G=D_8$, and $V=\R[G]$, the regular representation of $G$. For any subgroup $H$, $V^H=\R^{[G:H]}$. In this example we compute the homology Bredon coefficient system of $\Conf(V,3)$. To simplify the notation, we denote denote subgroup of $G$ and canonical orbit category the same as in Section \ref{Computation for D8}.
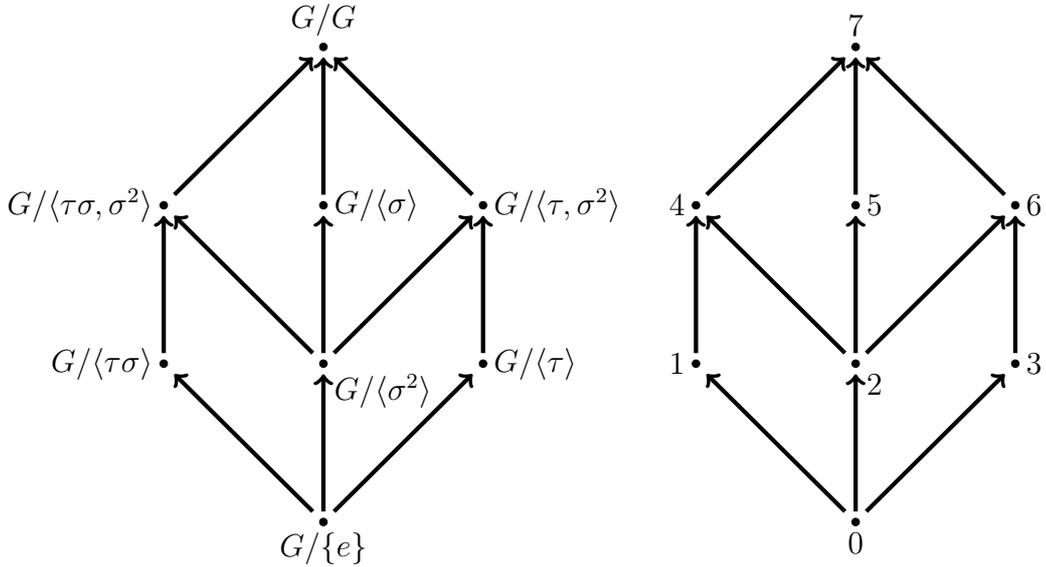
\begin{figure}[H]
\centering
\begin{tikzpicture}[scale=0.7]
\filldraw[black] (0,0) circle (2pt) node[below] {$G/\{e\}$};
\filldraw[black] (0,3) circle (2pt) node[below right] {$G/\langle\sigma^2\rangle$};
\filldraw[black] (-3,3) circle (2pt) node[left] {$G/\langle\tau\sigma\rangle$};
\filldraw[black] (3,3) circle (2pt) node[right] {$G/\langle\tau\rangle$};
\filldraw[black] (0,6) circle (2pt) node[right] {$G/\langle\sigma\rangle$};
\filldraw[black] (-3,6) circle (2pt) node[left] {$G/\langle\tau\sigma,\sigma^2\rangle$};
\filldraw[black] (3,6) circle (2pt) node[right] {$G/\langle\tau, \sigma^2\rangle$};
\filldraw[black] (0,9) circle (2pt) node[above] {$G/G$};
\draw[ultra thick, ->](0,0.2)--(0,2.8);
\draw[ultra thick, ->](-0.2,0.2)--(-2.8,2.8);
\draw[ultra thick, ->](0.2,0.2)--(2.8,2.8);
\draw[ultra thick, ->](0,3.2)--(0,5.8);
\draw[ultra thick, ->](-3,3.2)--(-3,5.8);
\draw[ultra thick, ->](3,3.2)--(3,5.8);
\draw[ultra thick, ->](-0.2, 3.2)--(-2.8, 5.8);
\draw[ultra thick, ->](0.2,3.2)--(2.8,5.8);
\draw[ultra thick, ->](-2.8,6.2)--(-0.2,8.8);
\draw[ultra thick, ->](2.8,6.2)--(0.2,8.8);
\draw[ultra thick, ->](0,6.2)--(0,8.8);
\filldraw[black] (10,0) circle (2pt) node[below] {0};
\filldraw[black] (10,3) circle (2pt) node[below right] {2};
\filldraw[black] (7,3) circle (2pt) node[left] {1};
\filldraw[black] (13,3) circle (2pt) node[right] {3};
\filldraw[black] (10,6) circle (2pt) node[right] {5};
\filldraw[black] (7,6) circle (2pt) node[left] {4};
\filldraw[black] (13,6) circle (2pt) node[right] {6};
\filldraw[black] (10,9) circle (2pt) node[above] {7};
\draw[ultra thick, ->](10,0.2)--(10,2.8);
\draw[ultra thick, ->](9.8,0.2)--(7.2,2.8);
\draw[ultra thick, ->](10.2,0.2)--(12.8,2.8);
\draw[ultra thick, ->](10,3.2)--(10,5.8);
\draw[ultra thick, ->](7,3.2)--(7,5.8);
\draw[ultra thick, ->](13,3.2)--(13,5.8);
\draw[ultra thick, ->](9.8, 3.2)--(7.2, 5.8);
\draw[ultra thick, ->](10.2,3.2)--(12.8,5.8);
\draw[ultra thick, ->](7.2,6.2)--(9.8,8.8);
\draw[ultra thick, ->](12.8,6.2)--(10.2,8.8);
\draw[ultra thick, ->](10,6.2)--(10,8.8);
\end{tikzpicture}
\caption{Repeated figure of $\mathscr{O}_{D_8}$}
\end{figure}
Then homology Bredon coefficient system is given by the Figure \ref{fig:HBCVD8}.
\begin{figure}[H]
\centering
\begin{tikzpicture}[scale=1]
\filldraw[black] (0,0) circle (2pt) node[below] {\small $H_*(\Conf(\R^8,3))$};
\filldraw[black] (0,3) circle (2pt) node[below right] {\small $H_*(\Conf(\R^4,3))$};
\filldraw[black] (-3,3) circle (2pt) node[left] {\small $H_*(\Conf(\R^4,3))$};
\filldraw[black] (3,3) circle (2pt) node[right] {\small $H_*(\Conf(\R^4,3))$};
\filldraw[black] (0,6) circle (2pt) node[right] {\small $H_*(\Conf(\R^2,3))$};
\filldraw[black] (-3,6) circle (2pt) node[left] {\small $H_*(\Conf(\R^2,3))$};
\filldraw[black] (3,6) circle (2pt) node[right] {\small $H_*(\Conf(\R^2,3))$};
\filldraw[black] (0,9) circle (2pt) node[above] {\small $H_*(\Conf(\R,3))$};
\draw[ultra thick, <-](0,0.2)--(0,2.8);
\draw[ultra thick, <-](-0.2,0.2)--(-2.8,2.8);
\draw[ultra thick, <-](0.2,0.2)--(2.8,2.8);
\draw[ultra thick, <-](0,3.2)--(0,5.8);
\draw[ultra thick, <-](-3,3.2)--(-3,5.8);
\draw[ultra thick, <-](3,3.2)--(3,5.8);
\draw[ultra thick, <-](-0.2, 3.2)--(-2.8, 5.8);
\draw[ultra thick, <-](0.2,3.2)--(2.8,5.8);
\draw[ultra thick, <-](-2.8,6.2)--(-0.2,8.8);
\draw[ultra thick, <-](2.8,6.2)--(0.2,8.8);
\draw[ultra thick, <-](0,6.2)--(0,8.8);
\end{tikzpicture}
\caption{Homology Bredon coefficient system of $\Conf(\R[G],3)$} 
\label{fig:HBCVD8}
\end{figure}
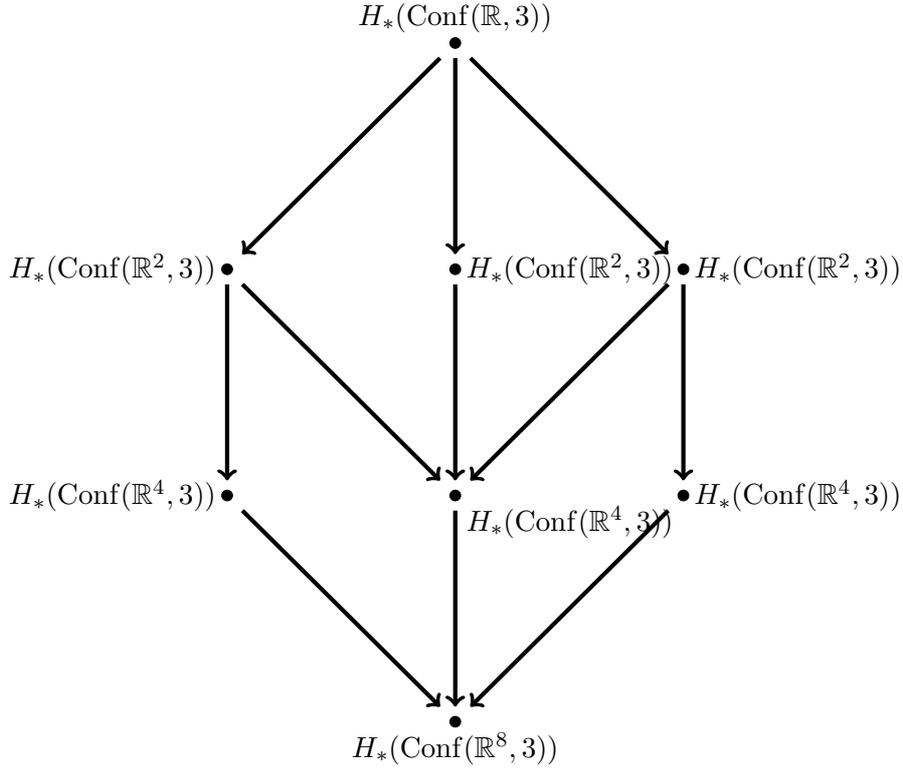
By Corollary \ref{homology of configuration spaces}, when $k>1$, 

\[
{H_n}(\Conf(\R^k,3))=\left\{
\begin{array}{ll}
\Q & \text{for } n=0\\
\Q^3, & \text{for } n=k-1\\
\Q^2, & \text{for } n=2(k-1)
\end{array}\right.
\]

Table \ref{Decomposition of Conf(V,3)} is the summary of the decomposition of the rational homology Bredon coefficient system of equivariant configuration space $\Conf(\R[G],3)$.

\begin{table}[H]
\centering
$\begin{array}{c|c}
\hline
\underline{H_n}(\Conf(\R[G],3))& \text{Decomposition}\\
\hline
n=0 & \underline{\Q}\oplus 5 \underline{\1_7}\\
n=1 & 3\underline{\1_4} \oplus 3\underline{\1_5} \oplus 3\underline{\1_6}\\
n=2 & 2\underline{\1_4} \oplus 2\underline{\1_5} \oplus 2\underline{\1_6}\\
n=3 & 3\underline{\1_1} \oplus 3\underline{\1_2} \oplus 3\underline{\1_3}\\
n=6 & 2\underline{\1_1} \oplus 2\underline{\1_2} \oplus 2\underline{\1_3}\\
n=7 & 3\underline{\1_0}\\
n=14 & 2\underline{\1_0}\\
\hline
\end{array}$
\caption{Decomposition of $\protect\underline{H_n}(\Conf(\R[G],3))$}
\label{Decomposition of Conf(V,3)}
\end{table}
\end{example}

\section{Rational Bredon cohomology of equivariant configuration spaces}
In this section we will apply universal coefficient spectral sequence to compute the rational Bredon cohomology of $\Conf(V,q)$. If $V$ is a free $G$-representation, we have given a direct sum decomposition of the homology coefficient system. Firstly, we have the following lemma.

\begin{lemma}
Given two Bredon coefficient system $\underline{M}$ and $\underline{N}$. If $\underline{M}=\oplus_k \underline{M_k}$, then 
$$\Hom_{\cg^\Q}(\underline{M},\underline{N})=\oplus_k\Hom_{\cg^\Q}(\underline{M_k},\underline{N})$$
\end{lemma}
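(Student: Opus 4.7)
The plan is to establish the identity by invoking the universal property of the coproduct in the abelian category $\cg^\Q$, combined with the fact that direct sums in this functor category are computed pointwise. Since the decomposition coming from Theorem \ref{decompostion of homology bcs} is a finite one (indexed by conjugacy classes of subgroups of $G$, and by Betti numbers of configuration spaces of Euclidean spaces, which are finite), the direct sum on the right-hand side agrees with the direct product, so no subtlety about infinite index sets arises.

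First I would recall that $\cg^\Q$ is the functor category $\mathrm{Fun}(\og^{op}, \mathrm{Mod}_\Q)$, in which limits and colimits are formed objectwise. In particular, for each orbit $G/H$,
$$\bigl(\oplus_k \underline{M_k}\bigr)(G/H) \;=\; \oplus_k\, \underline{M_k}(G/H),$$
with structure maps induced coordinatewise from those of each $\underline{M_k}$. The canonical inclusions of summands at each object assemble into natural transformations $\iota_k : \underline{M_k} \hookrightarrow \oplus_k \underline{M_k}$.

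Next I would exhibit the claimed isomorphism explicitly. Define
$$\Phi \colon \Hom_{\cg^\Q}\bigl(\oplus_k \underline{M_k},\, \underline{N}\bigr) \longrightarrow \oplus_k \Hom_{\cg^\Q}\bigl(\underline{M_k},\, \underline{N}\bigr), \qquad f \mapsto (f \circ \iota_k)_k,$$
which is well-defined (only finitely many $\underline{M_k}$ are nonzero in the relevant applications). For the inverse, given a family $(f_k)_k$ with $f_k : \underline{M_k} \to \underline{N}$, I would define $\Psi\bigl((f_k)_k\bigr)$ objectwise by invoking the universal property of the direct sum in $\mathrm{Mod}_\Q$: at each $G/H$ we take the unique $\Q$-linear map $\oplus_k \underline{M_k}(G/H) \to \underline{N}(G/H)$ whose restriction to the $k$-th summand is $(f_k)_{G/H}$. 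Naturality of the resulting assignment in $G/H$ follows because each $f_k$ is itself a natural transformation, and the structure maps of $\oplus_k \underline{M_k}$ are defined coordinatewise.

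Finally I would check that $\Phi$ and $\Psi$ are mutually inverse, which is immediate from the universal property on each orbit. There is no genuine obstacle here; the only thing worth flagging is the distinction between $\oplus$ and $\prod$, and this is resolved by the observation that all indexing sets appearing in this paper are finite, so the two agree.
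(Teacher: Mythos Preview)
Your argument is correct. In the paper this lemma is stated without proof---it is invoked as a standard fact before passing to the computation of $\Hom_{\cg^\Q}(\underline{\1_H},\underline{N})$---so there is nothing to compare against; your write-up simply supplies the routine categorical justification that the paper omits. Your remark distinguishing $\oplus$ from $\prod$ is worth keeping: the general identity is $\Hom(\oplus_k M_k, N)\cong \prod_k \Hom(M_k,N)$, and it is the finiteness of the decomposition in Theorem~\ref{decompostion of homology bcs} that lets one write $\oplus$ on the right-hand side as the paper does.
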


Thus, the computation  $E_2$-page of the universal coefficient spectral sequence, we need to compute the $\Q$ dimension of $$\Hom_{\cg^\Q}(\underline{{1_H}},\underline{N}).$$

\begin{proposition}
\label{homorphism of 1dim}
Let $G$ be a finite group, H is a subgroup of $G$, for any rational coefficient system $\underline{N}\in \cg^\Q$, $$\Hom_{\cg^\Q}(\underline{{1_H}},\underline{N})\cong \underline{N}(G/H)\bigcap_{K<H}\Ker(i_K^H)$$ where $i_K^H: N(G/H)\rightarrow N(G/K)$ is the map induced by the natural projection $i:G/K\rightarrow G/H$.
\end{proposition}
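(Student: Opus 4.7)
The plan is to unpack the data of a natural transformation $\phi: \underline{\1_H} \to \underline{N}$ and characterise exactly which elements of $\underline{N}(G/H)$ can arise as $\phi_{G/H}(1)$. Since $\underline{\1_H}(G/K)=0$ whenever $K$ is not conjugate to $H$, the components $\phi_{G/K}$ are forced to be zero outside the conjugacy class of $H$, and in the reduced orbit category the entire natural transformation is encoded by a single vector $v:=\phi_{G/H}(1)\in \underline{N}(G/H)$. The proof then reduces to identifying the constraints that naturality imposes on $v$.

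To extract these constraints, I would enumerate the morphisms $f:G/K\to G/L$ of $\og$ by cases. In the case $L$ not conjugate to $H$, both sides of the naturality square land in a zero module, so nothing is imposed. In the case $K=L=H$ the morphism $f$ is an element of the Weyl group $WH$, and naturality forces $\underline{N}(f)(v)=v$. In the case $L=H$ and $K$ not conjugate to $H$, any $G$-map $G/K\to G/H$ factors, up to replacing $K$ by a conjugate, through the canonical projection $G/K'\to G/H$ for some proper subgroup $K'<H$; since the corresponding component $\phi_{G/K}$ is zero, naturality forces $i_{K'}^{H}(v)=0$. Thus $v$ must lie in $\bigcap_{K<H}\Ker(i_K^H)$.

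For the converse direction I would take any $v$ satisfying these conditions, set $\phi_{G/H}(1):=v$ and $\phi_{G/K}:=0$ for $K$ not conjugate to $H$, and verify that every naturality square commutes, using the same three cases together with the fact that every $G$-map $G/K\to G/H$ in $\og$ factors through a projection from a subgroup of $H$. The assignment $\phi\mapsto \phi_{G/H}(1)$ is plainly $\Q$-linear and injective, giving the desired isomorphism of $\Q$-vector spaces.

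The only genuine subtlety — and what I would flag as the main technical point — is the Weyl-group invariance coming from case $K=L=H$, which is not explicitly named on the right-hand side of the proposition. I would read the intersection as implicitly taken inside $\underline{N}(G/H)^{WH}$ (equivalently, interpret the structure map notation $i_K^H$ as including the endomorphisms $w\in WH$), so that the stated identification is consistent. Once this convention is fixed, the remaining work is the routine case analysis above and presents no further obstacle.
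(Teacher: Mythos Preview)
Your approach is correct and follows the same core idea as the paper: a natural transformation out of $\underline{\1_H}$ is determined by a single element $v\in\underline{N}(G/H)$, and the naturality square for each projection $G/K\to G/H$ with $K<H$ forces $v$ into $\Ker(i_K^H)$. The paper's own proof is considerably terser than yours --- it records only this one commutative-diagram observation and omits both the converse direction and the $WH$-invariance subtlety you correctly flag.
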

\begin{proof}
Consider the following commutative diagram, 
\begin{figure}[H]
\centering
\begin{tikzpicture}[scale=1]
\node[] at (0,0) {$0$};
\node[] at (0,2) {$\Q$};
\node[] at (2,0) {$V$};
\node[] at (2,2) {$W$};
\node[above] at (1,2) {$h$};
\node[right] at (2,1) {$f$};
\node[above] at (1,0) {$ $};
\node[left] at (0,1) {$ $};
\draw[ultra thick, ->](0,1.7)--(0,0.3);
\draw[ultra thick, ->](2,1.7)--(2,0.3);
\draw[ultra thick, ->](0.3,0)--(1.7,0);
\draw[ultra thick, ->](0.3,2)--(1.7,2);
\end{tikzpicture}
\end{figure}
We have $f\circ h=0$. Namely, $h$ maps $\Q$ into the Kernel of $f$.  
\end{proof}

\subsection{Computation for 3-configuration spaces}
Similar to the polyhedral products case, in this subsection we consider the 1-dimensional coefficient system $\underline{\1_0}$ given by Figure \ref{fig:BC10}.
\begin{figure}[H]
\centering
\begin{tikzpicture}[scale=0.7]
\filldraw[black] (0,0) circle (2pt) node[below] {$\Q$};
\filldraw[black] (0,3) circle (2pt) node[below right] {$0$};
\filldraw[black] (-3,3) circle (2pt) node[left] {$0$};
\filldraw[black] (3,3) circle (2pt) node[right] {$0$};
\filldraw[black] (0,6) circle (2pt) node[right] {$0$};
\filldraw[black] (-3,6) circle (2pt) node[left] {$0$};
\filldraw[black] (3,6) circle (2pt) node[right] {$0$};
\filldraw[black] (0,9) circle (2pt) node[above] {$0$};
\draw[ultra thick, <-](0,0.2)--(0,2.8);
\draw[ultra thick, <-](-0.2,0.2)--(-2.8,2.8);
\draw[ultra thick, <-](0.2,0.2)--(2.8,2.8);
\draw[ultra thick, <-](0,3.2)--(0,5.8);
\draw[ultra thick, <-](-3,3.2)--(-3,5.8);
\draw[ultra thick, <-](3,3.2)--(3,5.8);
\draw[ultra thick, <-](-0.2, 3.2)--(-2.8, 5.8);
\draw[ultra thick, <-](0.2,3.2)--(2.8,5.8);
\draw[ultra thick, <-](-2.8,6.2)--(-0.2,8.8);
\draw[ultra thick, <-](2.8,6.2)--(0.2,8.8);
\draw[ultra thick, <-](0,6.2)--(0,8.8);
\end{tikzpicture}
\caption{$\protect\underline{\1_0}$} 
\label{fig:BC10}
\end{figure}

By Theorem \ref{injective envelope}, $\underline{\1_0}$ admits an injective resolution 
$$0\rightarrow \underline{\1_0}\rightarrow \underline{\Q}\rightarrow \underline{I_1} \rightarrow \underline{I_2}\rightarrow 0$$ 
The two new injectives $\underline{I_1}$ and $\underline{I_2}$ is given by Figure \ref{fig:I1BC10} and Figure \ref{fig:I2BC10}.

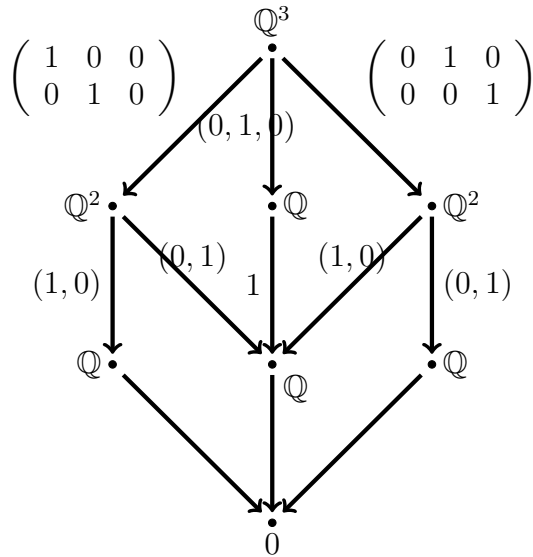
\begin{figure}[H]
\centering
\begin{tikzpicture}[scale=0.7]
\filldraw[black] (0,0) circle (2pt) node[below] {$0$};
\filldraw[black] (0,3) circle (2pt) node[below right] {$\Q$};
\filldraw[black] (-3,3) circle (2pt) node[left] {$\Q$};
\filldraw[black] (3,3) circle (2pt) node[right] {$\Q$};
\filldraw[black] (0,6) circle (2pt) node[right] {$\Q$};
\filldraw[black] (-3,6) circle (2pt) node[left] {$\Q^2$};
\filldraw[black] (3,6) circle (2pt) node[right] {$\Q^2$};
\filldraw[black] (0,9) circle (2pt) node[above] {$\Q^3$};
\draw[ultra thick, <-](0,0.2)--(0,2.8);
\draw[ultra thick, <-](-0.2,0.2)--(-2.8,2.8);
\draw[ultra thick, <-](0.2,0.2)--(2.8,2.8);
\draw[ultra thick, <-](0,3.2)--(0,5.8);
\draw[ultra thick, <-](-3,3.2)--(-3,5.8);
\draw[ultra thick, <-](3,3.2)--(3,5.8);
\draw[ultra thick, <-](-0.2, 3.2)--(-2.8, 5.8);
\draw[ultra thick, <-](0.2,3.2)--(2.8,5.8);
\draw[ultra thick, <-](-2.8,6.2)--(-0.2,8.8);
\draw[ultra thick, <-](2.8,6.2)--(0.2,8.8);
\draw[ultra thick, <-](0,6.2)--(0,8.8);
\node[above left] at (-1.5,7.5) {$\left(
\begin{array}{ccc}
1 &0 & 0\\
0 &1 & 0
\end{array}\right)$};
\node[above right] at (1.5,7.5) {$\left(
\begin{array}{ccc}
0 &1 & 0\\
0 &0 & 1
\end{array}\right)$};
\node[left] at (-3,4.5) {$(1,0)$};
\node[right] at (3,4.5) {$(0,1)$};
\node[above] at (-1.5,4.5) {$(0,1)$};
\node[above] at (1.5,4.5) {$(1,0)$};
\node[left] at (0,4.5) {$1$};
\node[] at (-0.5,7.5) {$(0,1,0)$};
\end{tikzpicture}
\caption{$\protect\underline{I_1}$} 
\label{fig:I1BC10}
\end{figure}

\begin{figure}[H]
\centering
\begin{tikzpicture}[scale=0.7]
\filldraw[black] (0,0) circle (2pt) node[below] {$0$};
\filldraw[black] (0,3) circle (2pt) node[below right] {$0$};
\filldraw[black] (-3,3) circle (2pt) node[left] {$0$};
\filldraw[black] (3,3) circle (2pt) node[right] {$0$};
\filldraw[black] (0,6) circle (2pt) node[right] {$0$};
\filldraw[black] (-3,6) circle (2pt) node[left] {$\Q$};
\filldraw[black] (3,6) circle (2pt) node[right] {$\Q$};
\filldraw[black] (0,9) circle (2pt) node[above] {$\Q^2$};
\draw[ultra thick, <-](0,0.2)--(0,2.8);
\draw[ultra thick, <-](-0.2,0.2)--(-2.8,2.8);
\draw[ultra thick, <-](0.2,0.2)--(2.8,2.8);
\draw[ultra thick, <-](0,3.2)--(0,5.8);
\draw[ultra thick, <-](-3,3.2)--(-3,5.8);
\draw[ultra thick, <-](3,3.2)--(3,5.8);
\draw[ultra thick, <-](-0.2, 3.2)--(-2.8, 5.8);
\draw[ultra thick, <-](0.2,3.2)--(2.8,5.8);
\draw[ultra thick, <-](-2.8,6.2)--(-0.2,8.8);
\draw[ultra thick, <-](2.8,6.2)--(0.2,8.8);
\draw[ultra thick, <-](0,6.2)--(0,8.8);
\node[above left] at (-1.5,7.5) {$(1,0)$};
\node[above right] at (1.5,7.5) {$(0,1)$};
\end{tikzpicture}
\caption{$\protect\underline{I_2}$} 
\label{fig:I2BC10}
\end{figure}
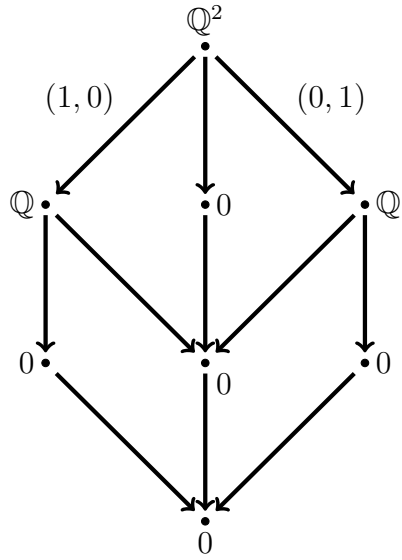

By Proposition \ref{homorphism of 1dim}, we have the following result.
\begin{enumerate}
\item
$\Hom_{\cg^Q}(\underline{\Q},\underline{\Q})=\Q$;
\item
$\Hom_{\cg^Q}(\underline{\1_0},\underline{\Q})=\Q$;
\item
$\Hom_{\cg^Q}(\underline{\1_k},\underline{\Q})=0$ for $k=1,2,\cdots, 7$;
\item
The map between $\Q$ and $\underline{I_1}$ is given in Figure \ref{D_8QI1}.

\begin{figure}[H]
\centering
\begin{tikzpicture}[scale=0.7]
\filldraw[black] (0,0) circle (2pt) node[below] {$\Q$};
\filldraw[black] (0,3) circle (2pt) node[below right] {$\Q$};
\filldraw[black] (-3,3) circle (2pt) node[left] {$\Q$};
\filldraw[black] (3,3) circle (2pt) node[right] {$\Q$};
\filldraw[black] (0,6) circle (2pt) node[right] {$\Q$};
\filldraw[black] (-3,6) circle (2pt) node[left] {$\Q$};
\filldraw[black] (3,6) circle (2pt) node[right] {$\Q$};
\filldraw[black] (0,9) circle (2pt) node[above] {$\Q$};
\draw[ultra thick, <-](0,0.2)--(0,2.8);
\draw[ultra thick, <-](-0.2,0.2)--(-2.8,2.8);
\draw[ultra thick, <-](0.2,0.2)--(2.8,2.8);
\draw[ultra thick, <-](0,3.2)--(0,5.8);
\draw[ultra thick, <-](-3,3.2)--(-3,5.8);
\draw[ultra thick, <-](3,3.2)--(3,5.8);
\draw[ultra thick, <-](-0.2, 3.2)--(-2.8, 5.8);
\draw[ultra thick, <-](0.2,3.2)--(2.8,5.8);
\draw[ultra thick, <-](-2.8,6.2)--(-0.2,8.8);
\draw[ultra thick, <-](2.8,6.2)--(0.2,8.8);
\draw[ultra thick, <-](0,6.2)--(0,8.8);
\filldraw[black] (10,0) circle (2pt) node[below] {$0$};
\filldraw[black] (10,3) circle (2pt) node[below right] {$\Q$};
\filldraw[black] (7,3) circle (2pt) node[left] {$\Q$};
\filldraw[black] (13,3) circle (2pt) node[right] {$\Q$};
\filldraw[black] (10,6) circle (2pt) node[right] {$\Q$};
\filldraw[black] (7,6) circle (2pt) node[left] {$\Q^2$};
\filldraw[black] (13,6) circle (2pt) node[right] {$\Q^2$};
\filldraw[black] (10,9) circle (2pt) node[above] {$\Q^3$};
\draw[ultra thick, <-](10,0.2)--(10,2.8);
\draw[ultra thick, <-](9.8,0.2)--(7.2,2.8);
\draw[ultra thick, <-](10.2,0.2)--(12.8,2.8);
\draw[ultra thick, <-](10,3.2)--(10,5.8);
\draw[ultra thick, <-](7,3.2)--(7,5.8);
\draw[ultra thick, <-](13,3.2)--(13,5.8);
\draw[ultra thick, <-](9.8, 3.2)--(7.2, 5.8);
\draw[ultra thick, <-](10.2,3.2)--(12.8,5.8);
\draw[ultra thick, <-](7.2,6.2)--(9.8,8.8);
\draw[ultra thick, <-](12.8,6.2)--(10.2,8.8);
\draw[ultra thick, <-](10,6.2)--(10,8.8);
\draw [ultra thick, ->,red] (0,2.8) to [out=-10,in=-170] (10,2.8);
\draw [ultra thick, ->,red] (0,5.8) to [out=-10,in=-170] (10,5.8);
\draw [ultra thick, ->,red] (0,9.2) to [out=10,in=170] (10,9.2);
\draw [ultra thick, ->,red] (-3,3.2) to [out=10,in=170] (7,3.2);
\draw [ultra thick, ->,red] (3,3.2) to [out=10,in=170] (13,3.2);
\draw [ultra thick, ->,red] (-3,6.2) to [out=10,in=170] (7,6.2);
\draw [ultra thick, ->,red] (3,6.2) to [out=10,in=170] (13,6.2);
\node[red] at (2,4.2) {$f_1$};
\node[red] at (5,2.7) {$f_2$};
\node[red] at (8,4.2) {$f_3$};
\node[red] at (1,7.2) {$f_4$};
\node[red] at (5,5.7) {$f_5$};
\node[red] at (9,7.2) {$f_6$};
\node[red] at (5,9.3) {$f_7$};
\end{tikzpicture}
\caption{$\Hom_{\cg^{\Q}}(\protect\underline{\Q}, \protect\underline{I_1})$} 
\label{D_8QI1}
\end{figure}
\begin{enumerate}
\item
We first assume $f_1=s$, $f_2=t$, $f_3=r$.
\item
Next we set $f_4=(a_1, a_2)^T$, then by the commutative diagram
\begin{figure}[H]
\centering
\begin{tikzpicture}[scale=1]
\node[] at (0,0) {$\Q$};
\node[] at (0,2) {$\Q$};
\node[] at (2,0) {$\Q$};
\node[] at (2,2) {$\Q^2$};
\node[above] at (1,2) {$(a_1,a_2)^T$};
\node[right] at (2,1) {$(1,0)$};
\node[above] at (1,0) {$f_1=s$};
\node[left] at (0,1) {$1$};
\draw[ultra thick, ->](0,1.7)--(0,0.3);
\draw[ultra thick, ->](2,1.7)--(2,0.3);
\draw[ultra thick, ->](0.3,0)--(1.7,0);
\draw[ultra thick, ->](0.3,2)--(1.7,2);
\end{tikzpicture}
\end{figure}
we have $a_1=s$. And by another commutative diagram
\begin{figure}[H]
\centering
\begin{tikzpicture}[scale=1]
\node[] at (0,0) {$\Q$};
\node[] at (0,2) {$\Q$};
\node[] at (2,0) {$\Q$};
\node[] at (2,2) {$\Q^2$};
\node[above] at (1,2) {$(a_1,a_2)^T$};
\node[right] at (2,1) {$(0,1)$};
\node[above] at (1,0) {$f_2=t$};
\node[left] at (0,1) {$1$};
\draw[ultra thick, ->](0,1.7)--(0,0.3);
\draw[ultra thick, ->](2,1.7)--(2,0.3);
\draw[ultra thick, ->](0.3,0)--(1.7,0);
\draw[ultra thick, ->](0.3,2)--(1.7,2);
\end{tikzpicture}
\end{figure}
we have $a_2=t$. Hence $f_4=(s,t)^T$.
\item
Next consider $f_5$, by the commutative diagram
\begin{figure}[H]
\centering
\begin{tikzpicture}[scale=1]
\node[] at (0,0) {$\Q$};
\node[] at (0,2) {$\Q$};
\node[] at (2,0) {$\Q$};
\node[] at (2,2) {$\Q$};
\node[above] at (1,2) {$f_5$};
\node[right] at (2,1) {$1$};
\node[above] at (1,0) {$f_2=t$};
\node[left] at (0,1) {$1$};
\draw[ultra thick, ->](0,1.7)--(0,0.3);
\draw[ultra thick, ->](2,1.7)--(2,0.3);
\draw[ultra thick, ->](0.3,0)--(1.7,0);
\draw[ultra thick, ->](0.3,2)--(1.7,2);
\end{tikzpicture}
\end{figure}
we have $f_5=t$.
\item
For $f_6$, assume that $$f_6=(b_1, b_2)^T.$$
From the commutative diagram
\begin{figure}[H]
\centering
\begin{tikzpicture}[scale=1]
\node[] at (0,0) {$\Q$};
\node[] at (0,2) {$\Q$};
\node[] at (2,0) {$\Q$};
\node[] at (2,2) {$\Q^2$};
\node[above] at (1,2) {$f_6$};
\node[right] at (2,1) {$(1,0)$};
\node[above] at (1,0) {$f_2=t$};
\node[left] at (0,1) {$1$};
\draw[ultra thick, ->](0,1.7)--(0,0.3);
\draw[ultra thick, ->](2,1.7)--(2,0.3);
\draw[ultra thick, ->](0.3,0)--(1.7,0);
\draw[ultra thick, ->](0.3,2)--(1.7,2);
\end{tikzpicture}
\end{figure}
we conclude that $b_1=t$. Similarly $b_2=r$. Hence
$f_6=(t,r)^T$.
\item
Finally for $f_7$, assume that $f_7=(c_1,c_2, c_3)^T.$
From the commutative diagram
\begin{figure}[H]
\centering
\begin{tikzpicture}[scale=1]
\node[] at (0,0) {$\Q$};
\node[] at (0,2) {$\Q$};
\node[] at (2,0) {$\Q^2$};
\node[] at (2,2) {$\Q^3$};
\node[above] at (1,2) {$f_7$};
\node[right] at (2,1) {$\left(
\begin{array}{ccc}
1 &0 & 0\\
0 &1 & 0
\end{array}\right)$};
\node[above] at (1,0) {$f_4$};
\node[left] at (0,1) {$1$};
\draw[ultra thick, ->](0,1.7)--(0,0.3);
\draw[ultra thick, ->](2,1.7)--(2,0.3);
\draw[ultra thick, ->](0.3,0)--(1.7,0);
\draw[ultra thick, ->](0.3,2)--(1.7,2);
\end{tikzpicture}
\end{figure}
we have $c_1=s$ and $c_2=t$. From the commutative diagram
\begin{figure}[H]
\centering
\begin{tikzpicture}[scale=1]
\node[] at (0,0) {$\Q$};
\node[] at (0,2) {$\Q$};
\node[] at (2,0) {$\Q^2$};
\node[] at (2,2) {$\Q^3$};
\node[above] at (1,2) {$f_7$};
\node[right] at (2,1) {$\left(
\begin{array}{ccc}
0&1 &0\\
0&0 &1
\end{array}\right)$};
\node[above] at (1,0) {$f_6$};
\node[left] at (0,1) {$1$};
\draw[ultra thick, ->](0,1.7)--(0,0.3);
\draw[ultra thick, ->](2,1.7)--(2,0.3);
\draw[ultra thick, ->](0.3,0)--(1.7,0);
\draw[ultra thick, ->](0.3,2)--(1.7,2);
\end{tikzpicture}
\end{figure}
we have $c_2=t, c_3=r$. Hence
$f_7=(s,t,r)^T$.
\item
In summary, there are three free variables in total and $$\Hom_{\cg^{\Q}}(\underline{\Q}, \underline{I_1})=\Q^3.$$
\end{enumerate}
\item
$\Hom_{\cg^Q}(\underline{\1_0},\underline{I_1})=0$;
\item
$\Hom_{\cg^Q}(\underline{\1_k},\underline{I_1})=\Q$ for $k=1,2,3$;
\item
$\Hom_{\cg^Q}(\underline{\1_k},\underline{I_1})=0$ for $k=4,5,6,7$;
\item
The map between $\underline{\Q}$ and $\underline{I_2}$ is given in Figure \ref{D_8QI2}.

\begin{figure}[h]
\centering
\begin{tikzpicture}[scale=0.7]
\filldraw[black] (0,0) circle (2pt) node[below] {$\Q$};
\filldraw[black] (0,3) circle (2pt) node[below right] {$\Q$};
\filldraw[black] (-3,3) circle (2pt) node[left] {$\Q$};
\filldraw[black] (3,3) circle (2pt) node[right] {$\Q$};
\filldraw[black] (0,6) circle (2pt) node[right] {$\Q$};
\filldraw[black] (-3,6) circle (2pt) node[left] {$\Q$};
\filldraw[black] (3,6) circle (2pt) node[right] {$\Q$};
\filldraw[black] (0,9) circle (2pt) node[above] {$\Q$};
\draw[ultra thick, <-](0,0.2)--(0,2.8);
\draw[ultra thick, <-](-0.2,0.2)--(-2.8,2.8);
\draw[ultra thick, <-](0.2,0.2)--(2.8,2.8);
\draw[ultra thick, <-](0,3.2)--(0,5.8);
\draw[ultra thick, <-](-3,3.2)--(-3,5.8);
\draw[ultra thick, <-](3,3.2)--(3,5.8);
\draw[ultra thick, <-](-0.2, 3.2)--(-2.8, 5.8);
\draw[ultra thick, <-](0.2,3.2)--(2.8,5.8);
\draw[ultra thick, <-](-2.8,6.2)--(-0.2,8.8);
\draw[ultra thick, <-](2.8,6.2)--(0.2,8.8);
\draw[ultra thick, <-](0,6.2)--(0,8.8);
\filldraw[black] (10,0) circle (2pt) node[below] {$0$};
\filldraw[black] (10,3) circle (2pt) node[below right] {$0$};
\filldraw[black] (7,3) circle (2pt) node[left] {$0$};
\filldraw[black] (13,3) circle (2pt) node[right] {$0$};
\filldraw[black] (10,6) circle (2pt) node[right] {$0$};
\filldraw[black] (7,6) circle (2pt) node[left] {$\Q$};
\filldraw[black] (13,6) circle (2pt) node[right] {$\Q$};
\filldraw[black] (10,9) circle (2pt) node[above] {$\Q^2$};
\draw[ultra thick, <-](10,0.2)--(10,2.8);
\draw[ultra thick, <-](9.8,0.2)--(7.2,2.8);
\draw[ultra thick, <-](10.2,0.2)--(12.8,2.8);
\draw[ultra thick, <-](10,3.2)--(10,5.8);
\draw[ultra thick, <-](7,3.2)--(7,5.8);
\draw[ultra thick, <-](13,3.2)--(13,5.8);
\draw[ultra thick, <-](9.8, 3.2)--(7.2, 5.8);
\draw[ultra thick, <-](10.2,3.2)--(12.8,5.8);
\draw[ultra thick, <-](7.2,6.2)--(9.8,8.8);
\draw[ultra thick, <-](12.8,6.2)--(10.2,8.8);
\draw[ultra thick, <-](10,6.2)--(10,8.8);
\draw [ultra thick, ->,red] (0,9.2) to [out=10,in=170] (10,9.2);
\draw [ultra thick, ->,red] (-3,6.2) to [out=10,in=170] (7,6.2);
\draw [ultra thick, ->,red] (3,6.2) to [out=10,in=170] (13,6.2);
\node[red] at (1,7.2) {$f_4$};
\node[red] at (9,7.2) {$f_6$};
\node[red] at (5,9.3) {$f_7$};
\end{tikzpicture}
\caption{$\Hom_{\cg^{\Q}}(\protect\underline{H_0}(Z(K;(D^1, S^0)), \protect\underline{I_2})$} 
\label{D_8QI2}
\end{figure}
\begin{enumerate}
\item
Firstly we assume that $f_4=r$ and $f_6=s$.
\item
From the following two commutative diagrams
\begin{figure}[H]
\centering
\begin{tikzpicture}[scale=1]
\node[] at (0,0) {$\Q$};
\node[] at (0,2) {$\Q$};
\node[] at (2,0) {$\Q$};
\node[] at (2,2) {$\Q^2$};
\node[above] at (1,2) {$f_7$};
\node[right] at (2,1) {$(1,0)$};
\node[above] at (1,0) {$r$};
\node[left] at (0,1) {$1$};
\draw[ultra thick, ->](0,1.7)--(0,0.3);
\draw[ultra thick, ->](2,1.7)--(2,0.3);
\draw[ultra thick, ->](0.3,0)--(1.7,0);
\draw[ultra thick, ->](0.3,2)--(1.7,2);
\end{tikzpicture}
\begin{tikzpicture}[scale=1]
\node[] at (0,0) {$\Q$};
\node[] at (0,2) {$\Q$};
\node[] at (2,0) {$\Q$};
\node[] at (2,2) {$\Q^2$};
\node[above] at (1,2) {$f_7$};
\node[right] at (2,1) {$(0,1)$};
\node[above] at (1,0) {$s$};
\node[left] at (0,1) {$1$};
\draw[ultra thick, ->](0,1.7)--(0,0.3);
\draw[ultra thick, ->](2,1.7)--(2,0.3);
\draw[ultra thick, ->](0.3,0)--(1.7,0);
\draw[ultra thick, ->](0.3,2)--(1.7,2);
\end{tikzpicture}
\end{figure}
We have $f_7=(s,t)^T$.
\item
In summary, $$\Hom_{\cg^{\Q}}(\underline{\Q}, \underline{I_2})=\Q^2.$$
\end{enumerate}
\item
$\Hom_{\cg^Q}(\underline{\1_k},\underline{I_2})=0$ for $k=0,1,2,3,5$;
\item
$\Hom_{\cg^Q}(\underline{\1_k},\underline{I_2})=\Q$ for $k=4,6$;
\item
$\Hom_{\cg^Q}(\underline{\1_k},\underline{I_2})=0$ for $k=7$;
\end{enumerate}

Combine with Table \ref{Decomposition of Conf(V,3)}, we have the $E_2$-page of the universal coefficient spectral sequence 
$$E_2^{p,q}=\Ext^q_{\cg^\Q}(\underline{H_p}(\Conf(V,3)),\underline{\1_0})$$
\begin{figure}[H]
\centering
\begin{tikzpicture}[scale=0.7]
\node[] at (1,1) {1};
\node[] at (8,1) {3};
\node[] at (15,1) {2};
\node[] at (1,2) {3};
\node[] at (4,2) {9};
\node[] at (7,2) {6};
\node[] at (1,3) {2};
\node[] at (2,3) {6};
\node[] at (3,3) {4};
\node[] at (1,-1) {0};
\node[] at (2,-1) {1};
\node[] at (3,-1) {2};
\node[] at (4,-1) {3};
\node[] at (5,-1) {4};
\node[] at (6,-1) {5};
\node[] at (7,-1) {6};
\node[] at (8,-1) {7};
\node[] at (9,-1) {8};
\node[] at (10,-1) {9};
\node[] at (11,-1) {10};
\node[] at (12,-1) {11};
\node[] at (13,-1) {12};
\node[] at (14,-1) {13};
\node[] at (15,-1) {14};
\draw[ultra thick, -](-1.5,0)--(16,0);
\draw[ultra thick, -](0,-1.5)--(0,4);
\end{tikzpicture}
\caption{$\Ext_{\cg^R}^{q}(\protect\underline{H_p}((\Conf(V,3)), \protect\underline{\1_0})$} 
\label{E_2pageConf3}
\end{figure}
There is one possible $d_2$-differential and the spectral sequence collapses at $E_3$-page. Hence we have the Bredon cohomology for $\Conf(\R[G],3)$ with coefficient $\underline{\1_0}$ for most of the degree except in degree 3 and 4. Further computation on the $d_2$-differential is needed to decide the rank on those degrees.

\begin{proposition}
The Bredon cohomology of $\Conf(\R[G],3)$ with coefficient system $\underline{\1_0}$ is given by the following table.
\begin{table}[H]
\centering
$\begin{array}{c|c}
\hline
& H_G^n(\Conf(\R[G],3),\1_0)\\
\hline
n=0 & \Q\\
n=1 & \Q^3\\
n=2 & \Q^2\\
n=3 & \Q^{6-k}\\
n=4 & \Q^{13-k}\\
n=7 & \Q^{9}\\
n=14 & \Q^{2}\\
\text{Otherwise} & 0\\
\hline
\end{array}$
\caption{Bredon cohomology $H_G^{\ast}(\Conf(\R[G],3),\protect\underline{\1_0})$}
\label{Bredon cohomology of Conf(R[G],3)}
\end{table}
for some integer $k\leq 6$.
\end{proposition}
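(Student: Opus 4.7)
The plan is to assemble the $E_2$-page of the universal coefficient spectral sequence of Theorem \ref{universal coefficient spectral sequence},
\[
E_2^{p,q}=\Ext^q_{\cg^\Q}\!\bigl(\underline{H_p}(\Conf(V,3)),\,\underline{\1_0}\bigr)\;\Longrightarrow\; H^{p+q}_G(\Conf(V,3),\underline{\1_0}),
\]
and then read off the Bredon cohomology once the sequence collapses. The homology coefficient systems $\underline{H_p}$ are already written as direct sums of the atomic pieces $\underline{\Q}$ and $\underline{\1_H}$ in Table \ref{Decomposition of Conf(V,3)}, so $\Hom$--additivity reduces each $\Ext^q(\underline{H_p},\underline{\1_0})$ to a sum of $\Ext^q$'s of atomic summands. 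For each atomic $\underline{M}$ I would compute the cohomology of the three-term complex $\Hom(\underline{M},\underline{\Q})\to\Hom(\underline{M},\underline{I_1})\to\Hom(\underline{M},\underline{I_2})$ arising from the injective resolution of $\underline{\1_0}$; these calculations are precisely the eleven bullet items listed just before the proposition, and combining them with the multiplicities in Table \ref{Decomposition of Conf(V,3)} populates the $E_2$-page displayed in Figure \ref{E_2pageConf3}.

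Next I would identify the differentials. The $E_2$-page is supported on the rows $q=0,1,2$, so only differentials of the form $d_2:E_2^{p,q}\to E_2^{p+2,q-1}$ can possibly be nonzero, and a direct inspection of Figure \ref{E_2pageConf3} leaves exactly one candidate with both source and target nonzero, namely
\[
d_2:E_2^{1,2}=\Q^{6}\longrightarrow E_2^{3,1}=\Q^{9}.
\]
Every $d_r$ with $r\geq 3$ lands outside the support of $E_r$, so the spectral sequence collapses at $E_3$. Writing $k$ for the rank of this differential gives $E_\infty^{1,2}=\Q^{6-k}$ and $E_\infty^{3,1}=\Q^{9-k}$, while the remaining $E_\infty^{p,q}$ agree with their $E_2$ counterparts. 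Summing $E_\infty^{p,q}$ along each antidiagonal $p+q=n$ yields the claimed table: the differential only affects $n=3$ and $n=4$, whose total ranks drop from $6$ to $6-k$ and from $9+4=13$ to $13-k$, while the totals at all other degrees read off unchanged.

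The main obstacle is pinning down $k$. Only the bound $0\leq k\leq 6$ is forced by the ranks of source and target, which matches the $k\leq 6$ recorded in the statement. Determining $k$ exactly would require an explicit chain-level model of $d_2$: one would represent a class in $\Ext^2(\underline{H_1},\underline{\1_0})$ by a length-two cocycle inside a Cartan--Eilenberg double complex built from projective resolutions of $\underline{H_*}(\Conf(V,3))$, trace it through the connecting morphism, and identify its image in $\Ext^1(\underline{H_3},\underline{\1_0})$. This needs strictly finer information about the inclusions $\Conf(V^H,3)\hookrightarrow \Conf(V^K,3)$ than the vanishing-on-homology provided by Proposition \ref{trivial inclusion}, and I would leave the precise value of $k$ open, mirroring the statement itself.
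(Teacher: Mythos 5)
Your proposal is correct and follows essentially the same route as the paper: assemble the $E_2$-page from the decomposition in Table \ref{Decomposition of Conf(V,3)} and the $\Hom$ computations against the injective resolution $\underline{\Q}\to\underline{I_1}\to\underline{I_2}$, observe that $d_2:E_2^{1,2}=\Q^6\to E_2^{3,1}=\Q^9$ is the only possibly nonzero differential so the sequence collapses at $E_3$, and read off the table with $k$ the (undetermined) rank of that differential. The paper likewise leaves $k$ open, so nothing is missing relative to its argument.
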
  

\subsection{Computation for 4-configuration spaces}
In the section we will compute the $E_2$-page of the universal coefficient spectral sequence for the equivariant configuration space $\Conf(\R[G],4)$ where $G=D_8$. For a general $q$-configuration spaces, the computation has no substantial difference. But there might be a few more notrivial $d_2$ differential and $d_3$-differential.

By Corollary \ref{homology of configuration spaces}, Proposition \ref{poincare series}, for $k\geq 1$, the rational homology of 4-configuration spaces is 

\[
{H_n}(\Conf(\R^k,4))=\left\{
\begin{array}{ll}
\Q & \text{for } n=0\\
\Q^6, & \text{for } n=k-1\\
\Q^{11}, & \text{for } n=2(k-1)\\
\Q^6, & \text{for } n=3(k-1)\\
\end{array}\right.
\]

Then by Theorem \ref{decompostion of homology bcs}, the decomposition of the rational Bredon coefficient system of $\Conf(\R[G],4)$ is given by the following table.

\begin{table}[H]
\centering
$\begin{array}{c|c}
\hline
\underline{H_n}(\Conf(\R[G],4))& \text{Decomposition}\\
\hline
n=0 & \underline{\Q}\oplus 23 \underline{\1_7}\\
n=1 & 6\underline{\1_4} \oplus 6\underline{\1_5} \oplus 6\underline{\1_6}\\
n=2 & 11\underline{\1_4} \oplus 11\underline{\1_5} \oplus 11\underline{\1_6}\\
n=3 &  6\underline{\1_1} \oplus 6\underline{\1_2} \oplus 6\underline{\1_3}\oplus 6\underline{\1_4} \oplus 6\underline{\1_5} \oplus 6\underline{\1_6}\\
n=6 & 11\underline{\1_1} \oplus 11\underline{\1_2} \oplus 11\underline{\1_3}\\
n=7 & 6\underline{\1_0}\\
n=9& 6\underline{\1_1} \oplus 6\underline{\1_2} \oplus 6\underline{\1_3}\\
n=14 & 11\underline{\1_0}\\
n=21 & 6\underline{\1_0}\\
\hline
\end{array}$
\caption{Decomposition of $\protect\underline{H_n}(\Conf(\R[G],4))$}
\label{Decomposition of Conf(R[G],4)}
\end{table}

Using the result on the dimension of related homomorphism set from previous subsection, we have the $E_2$-page of the universal coefficient spectral sequence.

$$E_2^{p,q}=\Ext^q_{\cg^\Q}(\underline{H_p}(\Conf(\R[G],4)),\underline{\1_0})$$
\begin{figure}[H]
\centering
\begin{tikzpicture}[scale=0.7,rotate=270,transform shape]
\node[] at (1,1) {1};
\node[] at (8,1) {6};
\node[] at (15,1) {11};
\node[] at (22,1) {6};
\node[] at (1,2) {3};
\node[] at (4,2) {18};
\node[] at (7,2) {33};
\node[] at (10,2) {18};
\node[] at (1,3) {2};
\node[] at (2,3) {12};
\node[] at (3,3) {22};
\node[] at (4,3) {12};
\node[] at (1,-1) {0};
\node[] at (2,-1) {1};
\node[] at (3,-1) {2};
\node[] at (4,-1) {3};
\node[] at (5,-1) {4};
\node[] at (6,-1) {5};
\node[] at (7,-1) {6};
\node[] at (8,-1) {7};
\node[] at (9,-1) {8};
\node[] at (10,-1) {9};
\node[] at (11,-1) {10};
\node[] at (12,-1) {11};
\node[] at (13,-1) {12};
\node[] at (14,-1) {13};
\node[] at (15,-1) {14};
\node[] at (16,-1) {15};
\node[] at (17,-1) {16};
\node[] at (18,-1) {17};
\node[] at (19,-1) {18};
\node[] at (20,-1) {19};
\node[] at (21,-1) {20};
\node[] at (22,-1) {21};
\draw[ultra thick, -](-1.5,0)--(23,0);
\draw[ultra thick, -](0,-1.5)--(0,4);
\end{tikzpicture}
\caption{$\Ext_{\cg^R}^{q}(\protect\underline{H_p}((\Conf(\R[G],4)), \protect\underline{\1_0})$} 
\label{E_2pageConf4}
\end{figure}
Similar to the 3-configuration space case, there is one possible $d_2$-differential and the spectral sequence collapses at $E_3$-page. Hence we have the Bredon cohomology for $\Conf(\R[G],4)$ with coefficient $\underline{\1_0}$ for most of the degree except in degree 3 and 4. Further computation on the $d_2$-differential is needed to decide the rank on those degrees.

\begin{proposition}
The Bredon cohomology of $\Conf(\R[G],4 )$ with coefficient system $\underline{\1_0}$ is given by the following table.
\begin{table}[H]
\centering
$\begin{array}{c|c}
\hline
& H_G^n(\Conf(\R[G],4),\1_0)\\
\hline
n=0 & \Q\\
n=1 & \Q^3\\
n=2 & \Q^2\\
n=3 & \Q^{12-k}\\
n=4 & \Q^{40-k}\\
n=5 & \Q^{12}\\
n=7 & \Q^{39}\\
n=10 & \Q^{18}\\
n=14 & \Q^{11}\\
n=21 & \Q^{6}\\
\text{Otherwise} & 0\\
\hline
\end{array}$
\caption{Bredon cohomology $H_G^{\ast}(\Conf(\R[G],4),\protect\underline{\1_0})$}
\label{Bredon cohomology of Conf(R[G],4)}
\end{table}
For some integer $k\leq 12$.
\end{proposition}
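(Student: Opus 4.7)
The plan is to run the universal coefficient spectral sequence
$$E_2^{p,q} = \Ext^q_{\cg^\Q}\bigl(\underline{H_p}(\Conf(\R[G],4)),\, \underline{\1_0}\bigr) \Rightarrow H_G^{p+q}(\Conf(\R[G],4),\, \underline{\1_0})$$
on a column-by-column basis, using Table~\ref{Decomposition of Conf(R[G],4)} to replace each $\underline{H_p}$ by a direct sum of the elementary coefficient systems $\underline{\1_H}$ (together with one copy of $\underline{\Q}$ in degree $0$). Since $\Ext^q(-, \underline{\1_0})$ converts direct sums to direct sums, it suffices to know $\Ext^q_{\cg^\Q}(\underline{\1_H}, \underline{\1_0})$ and $\Ext^q_{\cg^\Q}(\underline{\Q}, \underline{\1_0})$ for each conjugacy class of subgroups $H<D_8$ and each $q\geq 0$.

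To compute these $\Ext$-groups I would reuse the injective resolution
$$0 \to \underline{\1_0} \to \underline{\Q} \to \underline{I_1} \to \underline{I_2} \to 0$$
already set up in the $3$-configuration subsection. Applying $\Hom_{\cg^\Q}(-,\underline{\1_0})$, or equivalently using the numerical computations of $\Hom_{\cg^\Q}(\underline{\1_k}, \underline{\Q})$, $\Hom_{\cg^\Q}(\underline{\1_k}, \underline{I_1})$, $\Hom_{\cg^\Q}(\underline{\1_k}, \underline{I_2})$, and $\Hom_{\cg^\Q}(\underline{\Q}, \underline{I_j})$ carried out there, I would read off a fixed table of dimensions $\dim_\Q \Ext^q_{\cg^\Q}(\underline{\1_H}, \underline{\1_0})$ for $H$ running over the conjugacy classes $0,1,\dots,7$ of subgroups of $D_8$. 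Multiplying these by the multiplicities in each row of Table~\ref{Decomposition of Conf(R[G],4)} and summing yields the entries of the $E_2$-page displayed in Figure~\ref{E_2pageConf4}.

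From the shape of that $E_2$-page, most columns are supported only in a single row, so those contribute directly to $H_G^{p+q}$. The only possible nontrivial differential is a single $d_2\colon E_2^{p,q} \to E_2^{p+2,q-1}$ linking two adjacent columns in low total degree, and no $d_r$ for $r\geq 3$ has room to be nonzero, so the sequence collapses at $E_3$. Reading off the resulting $E_\infty$-page term by term gives all rows of the table of $H_G^n(\Conf(\R[G],4),\underline{\1_0})$ except two, where the indeterminate rank of this single $d_2$-differential shows up as the integer $k \leq 12$.

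The main obstacle is, as in the $3$-configuration case, the explicit determination of that remaining $d_2$-differential; this requires chasing the boundary map through the cochain complex $\Hom_{\cg^\Q}(\underline{H_*}(\Conf(\R[G],4)), \underline{\Q}) \to \Hom_{\cg^\Q}(\underline{H_*}(\Conf(\R[G],4)), \underline{I_1}) \to \Hom_{\cg^\Q}(\underline{H_*}(\Conf(\R[G],4)), \underline{I_2})$ at the relevant bidegree, and is left as a refinement encoded in the parameter $k$. Everything else is bookkeeping of the dimension counts above.
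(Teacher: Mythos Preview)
Your proposal is correct and follows essentially the same approach as the paper: both feed Table~\ref{Decomposition of Conf(R[G],4)} into the universal coefficient spectral sequence, recycle the injective resolution $0 \to \underline{\1_0} \to \underline{\Q} \to \underline{I_1} \to \underline{I_2} \to 0$ and the $\Hom$ dimensions computed in the $3$-configuration subsection to populate the $E_2$-page of Figure~\ref{E_2pageConf4}, observe that the only potentially nonzero differential is $d_2\colon E_2^{1,2}\to E_2^{3,1}$ so that the sequence collapses at $E_3$, and then read off the cohomology with the rank of that $d_2$ left as the parameter $k\le 12$.
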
  

Notice that since the constant coefficient system $\underline{\Q}$ is injective, the bottom rows in the universal coefficient spectral sequences in both Figure \ref{E_2pageConf3} and Figure \ref{E_2pageConf4} give the Bredon cohomology of $\Conf(\R[G],3)$ and $\Conf(\R[G],4)$ with constant $\Q$ coefficient system. Compared with the classic cohomology of the configuration spaces, 

\begin{corollary}
The Bredon cohomology of $\Conf(\R[G],3)$ and $\Conf(\R[G],4)$ with constant $\Q$ coefficient system is isomorphic to the classic cohomology the underlying configuration spaces. Namely,
$$H_G^{\ast}(\Conf(\R[V], k),\underline{\Q})\cong H^{\ast}(\Conf(\R^8),k)$$ for $k=3,4$.
\end{corollary}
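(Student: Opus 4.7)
The plan is to combine three ingredients from the body of the paper: the collapse of the universal coefficient spectral sequence at $E_2$ with constant rational coefficients (which follows from the injectivity of $\underline{\Q}$ in $\cg^\Q$), the decomposition of $\underline{H_n}(\Conf(\R[G],k))$ given by Theorem \ref{decompostion of homology bcs}, and the explicit computation of $\Hom_{\cg^\Q}(\underline{\1_H},\underline{\Q})$ coming from Proposition \ref{homorphism of 1dim}. Once these are in hand, the verification is purely bookkeeping.

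First, since $\underline{\Q}$ is injective in $\cg^{\Q}$, the $E_2$-page $\Ext^{q}_{\cg^{\Q}}(\underline{H_p}(X),\underline{\Q})$ is concentrated in the row $q=0$, and the universal coefficient spectral sequence collapses to give
$$H^n_G(\Conf(\R[G],k),\underline{\Q}) \;\cong\; \Hom_{\cg^{\Q}}\bigl(\underline{H_n}(\Conf(\R[G],k)),\,\underline{\Q}\bigr).$$
Next, by Theorem \ref{decompostion of homology bcs}, for $n>0$ the homology Bredon coefficient system splits as $\bigoplus_H \beta_{H,n}\underline{\1_H}$, and for $n=0$ it has an additional direct summand $\underline{\Q}$. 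Since $\Hom(-,\underline{\Q})$ is additive in the first slot, it suffices to compute $\Hom_{\cg^{\Q}}(\underline{\1_H},\underline{\Q})$ for each conjugacy class of subgroups $H$.

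The key observation is that since $\underline{\Q}$ is constant, every structure map $i_K^H\colon \underline{\Q}(G/H)\to\underline{\Q}(G/K)$ is the identity of $\Q$. Proposition \ref{homorphism of 1dim} therefore identifies $\Hom_{\cg^{\Q}}(\underline{\1_H},\underline{\Q})$ with $\Q\cap\bigcap_{K<H}\Ker(\mathrm{id}_\Q)$, which is $\Q$ exactly when $H=\{e\}$ (vacuous intersection) and vanishes as soon as $H$ has any proper subgroup. Consequently, for $n>0$ only the $\beta_{\{e\},n}\,\underline{\1_{\{e\}}}$ summand contributes, and
$$H^n_G(\Conf(\R[G],k),\underline{\Q})\;\cong\;\Q^{\beta_{\{e\},n}},$$
where $\beta_{\{e\},n}$ is by definition the $n$-th Betti number of $\Conf(V^{\{e\}},k)=\Conf(\R^{|G|},k)=\Conf(\R^8,k)$. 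By Corollary \ref{homology of configuration spaces}, this equals the rank of $H^n(\Conf(\R^8,k))$. For $n=0$, the $\underline{\Q}$ summand yields $\Hom(\underline{\Q},\underline{\Q})=\Q$, while the remaining one-dimensional summands (supported on subgroups with proper subgroups) are annihilated; this matches $H^0(\Conf(\R^8,k))=\Q$ since the space is connected. Reading the isomorphisms off degree by degree gives the stated comparison.

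There is no genuine obstacle here: the content of the corollary is the remark that the constancy of $\underline{\Q}$ forces the functor $\Hom(-,\underline{\Q})$ to see only the free stratum $V^{\{e\}}=\R^{|G|}$, so that the Bredon cohomology with constant rational coefficients literally recovers the classical rational cohomology of the underlying configuration space. The only verification that requires any care is the $n=0$ degree, where both the $\underline{\Q}$ summand and the $(k!-1)$ copies of a one-dimensional coefficient system appear in $\underline{H_0}$ and must be sorted out correctly.
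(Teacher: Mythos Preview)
Your argument is essentially the paper's own: the paper simply remarks that the bottom row of the $E_2$-pages in Figures~\ref{E_2pageConf3} and~\ref{E_2pageConf4} already computes $H^*_G(-,\underline{\Q})$, and those entries were obtained exactly via the Hom computations you spell out (items (1)--(3) of the list preceding Figure~\ref{D_8QI1}). One small caution on the $n=0$ step: Theorem~\ref{decompostion of homology bcs} as stated places the extra degree-zero summands at $\underline{\1_{\{e\}}}$, which by your own criterion would \emph{not} be killed by $\Hom(-,\underline{\Q})$ and would give $\Q^{q!}$ rather than $\Q$; the correct decomposition (as in Tables~\ref{Decomposition of Conf(V,3)} and~\ref{Decomposition of Conf(R[G],4)}) has $\underline{\1_G}$ there, which is what your parenthetical ``supported on subgroups with proper subgroups'' actually uses.
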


\nocite{*}
\bibliographystyle{abbrv}

\bibliography{BCC}

\end{document}